\newtheorem{theorem}{Theorem}[section]
\newtheorem{lemma}[theorem]{Lemma}
\newtheorem{corollary}[theorem]{Corollary}
\newtheorem{remark}[theorem]{Remark}
\newtheorem{proposition}[theorem]{Proposition}
\newtheorem{example}[theorem]{Example}
\theoremstyle{definition}
\newtheorem{definition}[theorem]{Definition}
\numberwithin{equation}{section}
\newcommand{\e}{\varepsilon}
\newcommand{\w}{\omega}
\newcommand{\IR}{\mathbb{R}}
\newcommand{\IN}{\mathbb N}
\newcommand{\ff}{\mathbb{F}}
\newcommand{\IF}{\mathbb{F}}
\newcommand{\TTT}{\mathcal{T}}
\newcommand{\FF}{\mathcal{F}}
\newcommand{\F}{\mathcal{F}}
\newcommand{\M}{\mathcal{M}}
\newcommand{\supp}{\mathrm{supp}}
\newcommand{\spn}{\mathrm{span}}
\newcommand{\CC}{C_k}
\newcommand{\SM}{{\setminus}}
\title[The Josefson--Nissenzweig property for locally convex spaces]{The Josefson--Nissenzweig property\\ for locally convex spaces}
\author{Taras Banakh and Saak Gabriyelyan}
\address{Ivan Franko National University of Lviv (Ukraine) and Jan Kochanowski University in Kielce (Poland)}
\email{t.o.banakh@gmail.com}
\address{Department of Mathematics, Ben-Gurion University of the Negev, Beer-Sheva, P.O. 653, Israel}
\email{saak@math.bgu.ac.il}
\subjclass[2010]{Primary 46A03; Secondary 46E10, 46E15}
\keywords{Josefson--Nissenzweig property, Banach space, Fr\'{e}chet space, function space, free locally convex space}
\begin{document}

\begin{abstract}
We define a locally convex space $E$ to have the {\em Josefson--Nissenzweig property} (JNP) if the identity map $(E',\sigma(E',E))\to ( E',\beta^\ast(E',E))$ is not sequentially continuous.
By the classical Josefson--Nissenzweig theorem, every infinite-dimensional Banach space has the JNP. A characterization of locally convex spaces with the JNP is given. We thoroughly study the JNP in various function spaces. Among other results we show that for a Tychonoff space $X$, the function space $C_p(X)$ has the JNP iff there is a weak$^\ast$ null-sequence $(\mu_n)_{n\in\w}$  of finitely supported sign-measures on $X$ with unit norm. However, for every Tychonoff space $X$, neither the space $B_1(X)$ of Baire-1 functions on $X$ nor the free locally convex space $L(X)$ over $X$ has the JNP.
\end{abstract}

\maketitle

%%%%%%%%%%%%%%%%%%%%%%%%%%%%%%%%%%%%%%%%%%%%%%%
%%%%%%%%%%%%%%%%%%%%%%%%%%%%%%%%%%%%%%%%%%%%%%%
%%%%%%%%%%%%%%%%%%%%%%%%%%%%%%%%%%%%%%%%%%%%%%%

\section{Introduction}

%%%%%%%%%%%%%%%%%%%%%%%%%%%%%%%%%%%%%%%%%%%%%%%
%%%%%%%%%%%%%%%%%%%%%%%%%%%%%%%%%%%%%%%%%%%%%%%
%%%%%%%%%%%%%%%%%%%%%%%%%%%%%%%%%%%%%%%%%%%%%%%

All locally convex spaces (lcs for short) are assumed to be Hausdorff and infinite-dimensional, and all topological spaces are assumed to be infinite and Tychonoff. We denote by $E'$ the topological dual of an lcs $E$. The dual space $E'$ of $E$ endowed with the weak$^\ast$ topology $\sigma(E',E)$ and the strong topology $\beta(E',E)$ is denoted by $E'_{w^\ast}$ and $E'_\beta$, respectively.  For a bounded subset $B\subseteq E$ and a functional $\chi\in E'$, we put $\|\chi\|_B:= \sup\{ |\chi(x)|:x\in B\cup\{0\}\}$.

Josefson \cite{Josefson} and Nissenzweig \cite{Nissen} proved independently the following theorem (other proofs of this beautiful result  were given by Hagler and Johnson \cite{HagJohn} and Bourgain and Diestel  \cite{BourDies}).
\begin{theorem}[Josefson--Nissenzweig] \label{t:JN-theorem}
Let $E$ be a Banach space. Then there is a null sequence $\{\chi_n\}_{n\in\w}$ in $E'_{w^\ast}$ such that $\| \chi_n\|=1$ for every $n\in\w$.
\end{theorem}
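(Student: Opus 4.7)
The plan is to construct the required sequence directly, by reducing first to a separable infinite-dimensional subspace and then invoking Rosenthal's $\ell_1$-dichotomy. Pick any separable infinite-dimensional closed subspace $F \subseteq E$. It is enough to produce a sequence $(\chi_n) \subseteq S_{F'}$ that is weak*-null in $F'$, since Hahn-Banach extensions $\tilde\chi_n \in E'$ preserve norms, and by a diagonal interleaving against a countable dense subset of $E$ one can ensure $\tilde\chi_n \to 0$ pointwise on all of $E$.

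Take a bounded, non-norm-convergent sequence $(x_n) \subseteq F$. Rosenthal's theorem yields a subsequence that is either (i) weakly Cauchy in $F$, or (ii) equivalent to the unit vector basis of $\ell_1$. In case (ii), an isomorphic copy of $\ell_1$ sits inside $F$; the biorthogonal functionals of this copy extend, via Hahn-Banach, to norm-one functionals $\chi_k \in S_{F'}$, and their Rademacher-type combinations $r_n = \tfrac{1}{|I_n|}\sum_{k \in I_n} \pm \chi_k$ over consecutive dyadic blocks $I_n$ can be arranged to have norm $1$ and vanish pointwise on $\ell_1$, hence to be weak*-null on the copy. In case (i), the differences $y_n = x_{2n+1} - x_{2n}$ are weakly null and, after a subsequence, bounded below in norm; passing to a basic subsequence via Bessaga--Pełczyński, the Hahn-Banach-extended biorthogonal functionals of $(y_n)$ provide a candidate sequence in $S_{F'}$.

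The principal obstacle is in case (i): ensuring that the candidate sequence $(\chi_n)$ is weak*-null on \emph{all} of $F$, not merely on $\overline{\mathrm{span}}(y_n)$. This is handled by interleaving the Hahn-Banach extensions with a countable dense sequence $(z_k) \subseteq F$, arranging inductively that $|\chi_n(z_k)| < 1/n$ for all $k \le n$ while keeping $\|\chi_n\|_F = 1$ and preserving the biorthogonal control that forces the desired weak*-decay on the $(y_n)$-side. The simultaneous demands of normalization and vanishing on a widening dense set are the delicate point and rely on the infinite-dimensionality of $F$; combined with the separable reduction from the first paragraph, this yields the required weak*-null sequence on the unit sphere of $E'$.
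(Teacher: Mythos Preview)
First, note that the paper does not prove this theorem: it is stated as the classical Josefson--Nissenzweig theorem and referenced to the original sources \cite{Josefson,Nissen} and the later proofs \cite{HagJohn,BourDies}. There is therefore no proof in the paper to compare your attempt against; what follows is an assessment of your argument on its own merits.

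Your reduction to a separable subspace is broken. You write that after producing a weak$^\ast$-null normalized sequence in $F'$, Hahn--Banach extensions $\tilde\chi_n\in S_{E'}$ can be made weak$^\ast$-null on $E$ ``by a diagonal interleaving against a countable dense subset of $E$''. But a non-separable $E$ has no countable dense subset, and the non-separable case is exactly where the theorem is hard. Concretely, take $E=\ell_\infty$ and $F=c_0$: the coordinate functionals $e_n^\ast$ form a weak$^\ast$-null normalized sequence in $c_0'$, yet their canonical extensions to $\ell_\infty'$ satisfy $e_n^\ast(\mathbf 1)=1$ for all $n$, so they are not weak$^\ast$-null on $\ell_\infty$. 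No interleaving fixes this, because there is no countable set against which to interleave. Producing a weak$^\ast$-null normalized sequence in $(\ell_\infty)'$ already requires the full strength of the theorem.

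Second, your machinery is misdirected even in the separable case. If $F$ is separable and infinite-dimensional, pick a dense sequence $(z_k)$, set $G_n=\mathrm{span}\{z_1,\dots,z_n\}$, and choose any $\chi_n\in S_{F'}$ with $\chi_n{\restriction}_{G_n}=0$ (possible since $G_n^\perp\subseteq F'$ is infinite-dimensional). Then $\chi_n\to 0$ weak$^\ast$ by density, and you are done---no Rosenthal dichotomy, no basic sequences, no Rademacher blocks. So the elaborate case split you propose addresses only the trivial part of the theorem. The genuine content of Josefson--Nissenzweig lies entirely in the non-separable case, and your argument does not touch it. Your case (ii) is also incomplete as stated: the ``Rademacher-type combinations'' of Hahn--Banach extensions of biorthogonal functionals are only controlled on the $\ell_1$-copy, and you give no mechanism for making them weak$^\ast$-null on the rest of $F$ (let alone $E$).
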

\noindent Therefore the identity map $E'_{w^\ast} \to E'_\beta$ is {\em not} sequentially continuous for every Banach space. Recall that a function $f:X\to Y$ between topological spaces $X$ and $Y$ is called {\em sequentially continuous} if for any convergent sequence $\{x_n\}_{n\in\w}\subseteq X$, the sequence $\{f(x_n)\}_{n\in\w}$ converges in $Y$ and $\lim_{n}f(x_n)=f(\lim_{n}x_n)$.

The Josefson--Nissenzweig theorem was extended to Fr\'{e}chet spaces in \cite{BLV}.
\begin{theorem}[Bonet--Lindstr\"{o}m--Valdivia] \label{t:BLV-theorem}
For a Fr\'{e}chet space $E$, the identity map $E'_{w^\ast} \to E'_\beta$ is  sequentially continuous if and only if $E$ is a Montel space.
\end{theorem}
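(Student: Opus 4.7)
The plan is to prove the two directions of Theorem~\ref{t:BLV-theorem} separately, using Theorem~\ref{t:JN-theorem} as a black box for the harder direction.

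\emph{Forward direction.} Let $E$ be a Fr\'echet--Montel space and let $(\chi_n)\subseteq E'$ be $\sigma(E',E)$-null. Being weak$^\ast$-bounded and $E$ being barrelled, the Banach--Steinhaus theorem yields that $(\chi_n)$ is equicontinuous, so $(\chi_n)\subseteq U^{\circ}$ for some $0$-neighborhood $U\subseteq E$. On equicontinuous subsets of $E'$ the topology $\sigma(E',E)$ coincides with the topology $\Tau_{pc}$ of uniform convergence on precompact subsets of $E$ (a standard Ascoli-type theorem), and since $E$ is Montel bounded sets in $E$ are precompact, so $\Tau_{pc}$ agrees with $\beta(E',E)$ on $U^{\circ}$. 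Hence $\chi_n\to 0$ in $\beta(E',E)$.

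\emph{Reverse direction, set-up.} Assume $E$ is Fr\'echet and not Montel, so some bounded set $B\subseteq E$ is not relatively compact. By metrizability I extract $(x_n)\subseteq B$ with no convergent subsequence, and then by a diagonal argument over a fundamental increasing sequence of continuous seminorms $(p_k)$ together with the completeness of $E$, there must exist a single seminorm $p=p_{k_0}$ such that $(x_n)$ has no $p$-Cauchy subsequence; otherwise a diagonal subsequence would be Cauchy in every $p_k$ and hence convergent in $E$. Passing to a further subsequence, $p(x_n-x_m)\ge\delta$ for some $\delta>0$ and all $n\ne m$. Let $F$ be the completion of $E/p^{-1}(0)$ (an infinite-dimensional Banach space) and $q_p\colon E\to F$ the canonical continuous map with dense range; the images $\hat x_n:=q_p(x_n)$ are then bounded and $\delta$-separated in $F$. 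Assuming $B$ absolutely convex, $\tilde B:=\overline{q_p(B)}\subseteq F$ is bounded, absolutely convex, closed and non-compact, so $G:=\spn\tilde B$ with Minkowski gauge of $\tilde B$ is an infinite-dimensional Banach space continuously embedded in $F$.

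\emph{Reverse direction, application and obstacle.} Applying Theorem~\ref{t:JN-theorem} to $G$ yields a sequence $(\phi_n)\subseteq G'$ with $\phi_n\to 0$ pointwise on $G$ and $\sup_{x\in \tilde B}|\phi_n(x)|=1$. I now want to lift these $\phi_n$ to functionals $\chi_n\in E'$ satisfying (i)~$\chi_n\to 0$ in $\sigma(E',E)$ and (ii)~$\|\chi_n\|_B\ge\tfrac{1}{2}$; once achieved, $(\chi_n)$ is the required weak$^\ast$-null sequence that fails to be $\beta(E',E)$-null, since $B$ is bounded in $E$. Hahn--Banach extension to $F'$ followed by precomposition with $q_p$ easily secures~(ii), but the weak$^\ast$-convergence required in~(i) is not automatic for an arbitrary extension, since $G$ may be far from dense in $F$. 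The central obstacle is therefore to produce a \emph{simultaneous} lift through the surjection $E'\to G'$ (given by restriction and Hahn--Banach) which preserves weak$^\ast$-convergence on all of $E$, not merely on $G$. I would attempt this by exploiting the equicontinuity furnished by the barrelledness of $E$, a diagonal selection in the defining seminorms of $E$, and, if necessary, a Bessaga--Pe\l{}czy\'nski/Rosenthal dichotomy applied to $(\hat x_n)\subseteq F$ to arrange a shrinking basic sequence whose biorthogonal functionals automatically extend to weak$^\ast$-null functionals on all of~$E$. Executing this simultaneous lift cleanly is, I expect, the most delicate part of the argument and the true technical heart of the Bonet--Lindstr\"om--Valdivia proof.
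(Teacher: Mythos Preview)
Your forward direction is correct; the paper argues it by noting that the strong dual of a Montel space is Montel and hence has the Schur property (Theorem~\ref{t:JNP-Montel}), which amounts to the same thing.

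Your reverse direction has a genuine gap, and it begins one step earlier than you locate it. The functionals $\phi_n\in G'$ are bounded for the Minkowski gauge of $\tilde B$, not for the norm of $F$; since the embedding $G\hookrightarrow F$ is continuous the inequality goes the wrong way, so $\phi_n$ need not be $F$-continuous on $G$ and Hahn--Banach to $F'$ is not available in general. Even when it is, you correctly observe that weak$^\ast$-nullity on $G$ says nothing about pointwise convergence on $F\setminus G$, hence nothing on $E$ after pulling back along $q_p$. Your proposed fixes via basic sequences or Rosenthal's dichotomy would at best manufacture weak$^\ast$-null biorthogonals in $G'$ or $F'$ again, which is where you started; lifting a weak$^\ast$-null sequence from a local Banach space to the whole Fr\'echet dual is precisely the obstruction that makes the Fr\'echet case non-trivial, and nothing in the outline overcomes it.

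The paper (Theorem~\ref{t:BLV-theorem-JNP}) avoids this entirely and does \emph{not} use Theorem~\ref{t:JN-theorem}. Its black box is instead the Lindstr\"om--Schlumprecht theorem \cite{Lind-Schlump}: sequential continuity of $E'_{w^\ast}\to E'_\beta$ forces a Fr\'echet space $E$ to be reflexive. For non-Montel $E$ one then argues by cases: either the identity map already fails sequential continuity, or $E$ is reflexive, and then Theorem~\ref{t:JNP-Montel} (via Proposition~3.7 of \cite{Gabr-free-resp}) supplies a $\sigma(E',E)$-convergent sequence in $E'$ that is not $\beta(E',E)$-convergent. The delicate lift you anticipate is thus bypassed; the work is absorbed by the reflexivity criterion, not by the Banach Josefson--Nissenzweig theorem.
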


%\noindent {\color{blue}Answering a question by Jarchow \cite{Jar}, Bonet \cite{Bonet-JN} and Lindstr\"{o}m and  Schlumprecht \cite{Lind-Schlump} provided also another extension of  the Josefson--Nissenzweig theorem by showing that a Fr\'{e}chet space $E$ is Schwartz if and only if every  null sequence in $E'_{w^\ast}$ converges uniformly to zero on some zero-neighborhood in $E$.

\noindent Another extension of  the Josefson--Nissenzweig theorem was provided by Bonet \cite{Bonet-JN} and Lindstr\"{o}m and  Schlumprecht \cite{Lind-Schlump} who proved that a Fr\'{e}chet space $E$ is a Schwartz space if and only if every  null sequence in $E'_{w^\ast}$ converges uniformly to zero on some zero-neighborhood in $E$.

Studying the separable quotient problem for $C_p$-spaces and being motivated by the Josefson--Nissenzweig theorem, Banakh, K\c akol and \'Sliwa  introduced in \cite{BKS} the Josefson--Nissenzweig property for the space $C_p(X)$ of continuous real-valued functions on a Tychonoff space $X$, endowed with the topology of pointwise convergence. Namely, they defined $C_p(X)$ to have the Josefson--Nissenzweig property (JNP) if the dual space $C_p(X)'$ of $C_p(X)$ contains a weak$^\ast$ null sequence $\{\mu_n\}_{n\in\w}$  of finitely supported sign-measures on $X$ such that $\|\mu_n\|:=|\mu_n|(X)=1$ for every $n\in\w$. This definition implies that for any Tychonoff space $X$ containing a non-trivial convergent sequence, the function space $C_p(X)$ has the JNP. Yet, there exists a compact space $K$ without non-trivial convergent sequences such that the function space $C_p(K)$ has the Josefson--Nissenzweig property, see Plebanek's example in \cite{BKS}. On the other hand, Banakh, K\c akol and \'Sliwa observed in \cite{BKS} that the function space $C_p(\beta\w)$ does not have the Josefson--Nissenzweig property.

Denote by $C^0_p(\w)$ the subspace of the product $\IR^\w$ consisting of all real-valued  functions on the discrete space $\w$ that tend to zero at infinity. The following characterization of $C_p$-spaces with the Josefson--Nissenzweig property is the main result of \cite{BKS}.

\begin{theorem}[Banakh--K\c akol--\'Sliwa] \label{t:JPN-Cp-BKS}
For a Tychonoff space  $X$, the following conditions are equivalent:
\begin{enumerate}
\item[{\rm (i)}]  $C_p(X)$ has the Josefson--Nissenzweig property;
\item[{\rm (ii)}]  $C_p(X)$ contains a complemented subspace isomorphic to $C^0_p(\w)$;
\item[{\rm (iii)}]  $C_p(X)$ has a quotient isomorphic to $C^0_p(\w)$;
\item[{\rm (iv)}]  $C_p(X)$ admits a linear continuous map onto $C^0_p(\w)$.
\end{enumerate}
\end{theorem}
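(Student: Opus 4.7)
The implications $(ii)\Ra(iii)\Ra(iv)$ are essentially formal. If $C_p(X)=F\oplus G$ with $F\cong C^0_p(\w)$, then $C_p(X)/G\cong F\cong C^0_p(\w)$, and any quotient map is in particular a continuous linear surjection.

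For $(iv)\Ra(i)$, the idea is to pull back the trivial JNP witness inside $C^0_p(\w)$ itself. The coordinate evaluations $\delta_n\colon g\mapsto g(n)$ are finitely supported sign-measures on $\w$ with $\|\delta_n\|=1$, and $\delta_n\to 0$ in $\sigma(C^0_p(\w)',C^0_p(\w))$ because every $g\in C^0_p(\w)$ vanishes at infinity. Given a continuous linear surjection $T\colon C_p(X)\to C^0_p(\w)$, the transpose $T'\colon C^0_p(\w)'_{w^\ast}\to C_p(X)'_{w^\ast}$ is continuous, so $\mu_n:=T'\delta_n$ is weak$^\ast$ null in $C_p(X)'$; since continuous linear functionals on $C_p(X)$ are exactly finitely supported sign-measures on $X$, each $\mu_n$ has the desired form. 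Choosing $f_n\in T^{-1}(e_n)$ gives $\mu_n(f_n)=1$, hence $\mu_n\neq 0$. A renormalization step (possibly after passing to a subsequence, to control $\|\mu_n\|$) yields the JNP sequence.

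For $(i)\Ra(ii)$, which is the heart of the theorem, I would start from a JNP sequence $\{\mu_n\}_{n\in\w}$ and perform two preparatory operations: (a) replace $(\mu_n)$ by suitable linear combinations of a subsequence (which remains a JNP witness), and (b) exploit the finite support of each $\mu_n$ together with weak$^\ast$ nullity to ``disjointify'' the supports in a controlled way. Then, using Tychonoff regularity of $X$, build continuous functions $f_n\in C_p(X)$ with $\mu_n(f_m)=\delta_{n,m}$ and with $\{\supp f_n\}_{n\in\w}$ \emph{locally finite} in $X$ (so that formal sums $\sum a_n f_n$ are automatically pointwise convergent continuous functions). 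The maps $S\colon C_p(X)\to C^0_p(\w)$, $f\mapsto (\mu_n(f))_{n\in\w}$, and $R\colon C^0_p(\w)\to C_p(X)$, $(a_n)_{n\in\w}\mapsto \sum_{n\in\w}a_n f_n$, are then continuous linear with $S\circ R=\id_{C^0_p(\w)}$, so $R\circ S$ is a continuous linear projection of $C_p(X)$ onto a complemented copy of $C^0_p(\w)$.

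The main obstacle is step (b): disjointifying the supports of a sequence of finitely supported sign-measures whose total supports may accumulate densely in $X$, as in Plebanek's example of a compact $K$ without nontrivial convergent sequences yet with $C_p(K)$ having the JNP. Naive disjointification succeeds only when the supports converge along a nontrivial convergent sequence in $X$; in general it must be replaced by a subtler argument that exploits cancellation in the sign-measure structure together with weak$^\ast$ nullity to produce a locally finite family of biorthogonal functions. This combinatorial step is where the genuine content of the theorem lies, and it is what justifies the equivalence of the seemingly much stronger condition (ii) with the a priori weaker conditions (iii) and (iv).
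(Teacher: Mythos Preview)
The paper does not prove Theorem~\ref{t:JPN-Cp-BKS}; it is quoted from \cite{BKS} as background, so there is no in-paper proof to compare against. Your outline has the right architecture for the easy directions, but it is not a proof.

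Two concrete gaps. First, in $(iv)\Ra(i)$ you produce $\mu_n=T'\delta_n$ and note $\mu_n(f_n)=1$ for some $f_n\in T^{-1}(e_n)$, then write ``a renormalization step (possibly after passing to a subsequence)''. This does not work without an argument that $\|\mu_n\|\not\to 0$: if $\|\mu_n\|\to 0$ then $\mu_n/\|\mu_n\|$ need not remain weak$^\ast$ null, and the fact that $\mu_n(f_n)=1$ gives no lower bound on $\|\mu_n\|$ because you have no control on $\|f_n\|_X$. Surjectivity of $T$ must be used more substantially here; the actual argument in \cite{BKS} is not the one-line pullback you describe.

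Second, for $(i)\Ra(ii)$ you yourself identify the obstacle and do not resolve it: weak$^\ast$ nullity together with finite supports does \emph{not} let you disjointify supports or build a locally finite biorthogonal family in general (your reference to Plebanek's example is exactly the point). The combinatorial/measure-theoretic work that produces the complemented copy of $C^0_p(\w)$ is the entire content of the theorem, and your proposal contains only a statement of what one would like to do, not a mechanism for doing it. As written, this is a proof plan with the hard step missing, not a proof.
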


\noindent This characterization shows that the Josefson--Nissenzweig property depends only on the locally convex structure of the space $C_p(X)$ in spite of the fact that its definition involves the norm in the dual space, which is a kind of an external structure for $C_p(X)$.

The aforementioned results motivate us  to define and study the Jossefson--Nissenzweig property in the class of all locally convex spaces, this is the main goal of the article.

Let $E$ be a locally convex space. Taking into account Theorem \ref{t:BLV-theorem}, it is natural  to say that $E$ has the Jossefson--Nissenzweig property if the identity map $E'_{w^\ast} \to E'_\beta$ is  not sequentially continuous. However, such definition is not fully consistent with the Josefson--Nissenzweig property for $C_p$-spaces:  the identity map $C_p(X)'_{w^\ast}\to C_p(X)'_\beta$ is not sequentially continuous for any infinite Tychonoff space $X$ containing a non-trivial convergent sequence $\{x_n\}_{n\in\w}$ because the strong dual of $C_p(X)$ is feral (see for example \cite[Proposition~2.6]{GK-DP}, recall that a locally convex space $E$ is called {\em feral} if any bounded subset of $E$ is finite-dimensional). Indeed, the sequence $\{\tfrac{1}{n}\delta_{x_n}\}_{n\in\w}$, where $\delta_x$ is the Dirac measure at the point $x$, is trivially $w^\ast$-null but it is not $\beta$-null because of ferality.  Therefore to give a ``right'' definition of the Josefson--Nissenzweig property, we should consider a weaker topology on $E'$ than the strong topology $\beta(E',E)$. A locally convex topology on the dual $E'$ which covers both cases described in Theorem \ref{t:BLV-theorem} and Theorem \ref{t:JPN-Cp-BKS} is the topology $\beta^\ast(E',E)$ of uniform convergence on $\beta(E,E')$-bounded subsets of $E$,  defined in \cite[8.4.3.C]{Jar}. Put $E'_{\beta^\ast}:= \big(E', \beta^\ast(E',E)\big)$.

\begin{definition} \label{def:JNP}
A locally convex space $E$ is said to have the {\em Jossefson--Nissenzweig property} (briefly, the JNP) if the identity map $E'_{w^\ast} \to E'_{\beta^\ast}$ is  not sequentially continuous.
\end{definition}

\noindent Now, if $E$ is a Fr\'{e}chet space, then $\beta^\ast(E',E)=\beta(E',E)$ by Corollary 10.2.2 of \cite{Jar}. Therefore Theorem \ref{t:BLV-theorem} states that a Fr\'{e}chet space $E$ has the JNP if and only if $E$ is not a Montel space.
In Section~\ref{s:JNP} we give an independent proof of Theorem \ref{t:BLV-theorem} in an extended form, see Theorem \ref{t:BLV-theorem-JNP}.
%On the other hand, in Corollary \ref{c:JNP-Cp} below we show  that a function space $C_p(X)$ has the JNP if and only if it has that property in the sense of Banakh, K\c akol and \'Sliwa \cite{BKS} mentioned above.
We also consider other important classes of locally convex spaces. In particular, we prove that for any Tychonoff space $X$, the space $B_1(X)$ of all Baire one functions on $X$ and the free locally convex space $L(X)$ over $X$ fail to have the JNP. In Theorem \ref{t:JNP-characterization} we give an operator characterization of locally convex spaces with the JNP. %In Section \ref{sec:b-feral} we introduce a new class of locally convex spaces, {\em $b$-feral spaces}, which do not have the JNP. We show that even Banach spaces may contain a dense $b$-feral subspaces, and hence there are normed space without the JNP. We show also the free locally convex space $L(X)$ over a Tychonoff space $X$ is $b$-feral and hence fails to have the JNP.
In Section \ref{sec:JNP-function} we study function spaces endowed with various natural topologies which have the JNP. In particular, in Corollary \ref{c:JNP-Cp} we show  that a function space $C_p(X)$ has the JNP if and only if it has that property in the sense of Banakh, K\c akol and \'Sliwa \cite{BKS} mentioned above. % In the last Section \ref{sec:Oper-JNP} we give an  operator characterizations of locally convex spaces with the JNP.

%%%%%%%%%%%%%%%%%%%%%%%%%%%%%%%%%%%%%%%%%%%%%%%
%%%%%%%%%%%%%%%%%%%%%%%%%%%%%%%%%%%%%%%%%%%%%%%
%%%%%%%%%%%%%%%%%%%%%%%%%%%%%%%%%%%%%%%%%%%%%%%

\section{The Josefson--Nissenzweig property in some classes of locally convex spaces} \label{s:JNP}

%%%%%%%%%%%%%%%%%%%%%%%%%%%%%%%%%%%%%%%%%%%%%%%
%%%%%%%%%%%%%%%%%%%%%%%%%%%%%%%%%%%%%%%%%%%%%%%
%%%%%%%%%%%%%%%%%%%%%%%%%%%%%%%%%%%%%%%%%%%%%%%

All locally convex spaces considered in this paper are  over the field $\IF$ of real or complex numbers.  %All maps considered in this paper are continuous.

Let $E$ be a locally convex space. A closed absorbent absolutely convex subset of $E$ is called a {\em barrel}.
The {\em polar} of a subset $A$ of $E$ is denoted by
\[
A^\circ :=\{ \chi\in E': \|\chi\|_A \leq 1\}, \quad\mbox{ where }\quad\|\chi\|_A=\sup\big\{|\chi(x)|: x\in A\cup\{0\}\big\}.
\]

Now we give a more clear and useful description of the topology $\beta^\ast(E',E)$. Recall that we defined a subset $B\subseteq E$ to be {\em barrel-bounded} if for any barrel $U\subseteq E$ there is an $n\in\w$ such that $B\subseteq nU$.
It is easy to see that each finite subset of $E$ is barrel-bounded and each barrel-bounded set in $E$ is bounded.
Observe that a subset of a barrelled space is bounded if and only if it is barrel-bounded. We recall that a locally convex space  $E$ is {\em barrelled} if each barrel in $E$ is a neighborhood of zero.
A neighborhood base at zero of the topology $\beta^\ast(E',E)$ on $E'$ consists of the polars $B^\circ$  of barrel-bounded subsets $B\subseteq E$.

Let $E$ be a locally convex space. The space $E$ with the weak topology $\sigma(E,E')$ is denoted by $E_w$. The strong second dual space $(E'_\beta)'_\beta$ of $E$ will be denoted by $E''$. Denote by $\psi_E: E\to E''$ the canonical evaluation map defined by $\psi_E(x)(\chi):=\chi(x)$ for all $x\in E$ and $\chi\in E'$. Recall that $E$ is called {\em reflexive} if $\psi_E$ is a topological isomorphism, and $E$ is  {\em Montel} if it is reflexive and every closed bounded subset of $E$ is compact. We recall that $E$ has the {\em  Shur property } if the identity map $E_w\to E$ is sequentially continuous. Recall also that $E$ is called {\em quasi-complete} if every closed bounded subset of $E$ is complete.

\begin{theorem} \label{t:JNP-Montel}
Let $E$ be a quasi-complete reflexive space whose every separable bounded subset is metrizable. Then $E$ has the JNP if and only if $E$ is not Montel.
\end{theorem}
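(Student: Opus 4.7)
The plan is to prove the two implications separately; the forward direction (JNP implies not Montel) is a short Ascoli/Banach--Steinhaus argument, whereas the converse requires a construction reducing to the classical Josefson--Nissenzweig theorem applied to a separable Banach space arising from a bounded non-compact subset of $E$.

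For the forward direction, I would argue the contrapositive. Suppose $E$ is Montel. Since $E$ is reflexive, it is barrelled, so every bounded subset of $E$ is barrel-bounded and therefore $\beta^\ast(E',E)=\beta(E',E)$. Being semi-Montel, $E$ has the property that every bounded set is relatively compact, so $\beta(E',E)$ equals the topology $\tau_c(E',E)$ of uniform convergence on compact subsets of $E$. Any weak$^\ast$-null sequence in $E'$ is weak$^\ast$-bounded, hence equicontinuous by Banach--Steinhaus; on equicontinuous subsets of $E'$ the weak$^\ast$ topology coincides with $\tau_c(E',E)$ by Ascoli's theorem. Therefore every weak$^\ast$-null sequence is $\beta^\ast$-null, and $E$ does not have the JNP.

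For the converse, assume $E$ is not Montel. A reflexive semi-Montel space is Montel, so $E$ is not semi-Montel and there is a closed absolutely convex bounded $B\subseteq E$ that is not relatively compact. Since $B$ is not totally bounded, there exist a continuous seminorm $p$ on $E$ and a sequence $(x_n)\subseteq B$ with $p(x_n-x_m)\geq 1$ for $n\neq m$. Reflexivity makes $B$ weakly compact, and quasi-completeness allows the application of the Eberlein--\v{S}mulian theorem, yielding a weakly convergent subsequence; replacing $x_n$ by the bounded weakly null differences $y_n:=x_{2n}-x_{2n-1}$ gives $y_n\to 0$ in $\sigma(E,E')$ with $p(y_n)\geq 1$. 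Passing to the Banach-space completion $\widehat{E}_p$ of $E/p^{-1}(0)$ via the canonical map $\pi\colon E\to\widehat{E}_p$, the sequence $\pi(y_n)$ is weakly null of norm $\geq 1$ in $\widehat{E}_p$. The closed linear span $M:=\overline{\spn}\{\pi(y_n)\}$ is a separable infinite-dimensional Banach subspace; the Bessaga--Pe\l czy\'nski selection principle then yields a basic subsequence, whose biorthogonal functionals $\xi_n\in M'$ are uniformly bounded, satisfy $\xi_n(\pi(y_m))=\delta_{nm}$, and converge to zero in $\sigma(M',M)$ by the standard partial-sum argument. Extending each $\xi_n$ by Hahn--Banach to $\widetilde{\xi}_n\in(\widehat{E}_p)'$ and pulling back yields $\chi_n:=\widetilde{\xi}_n\circ\pi\in E'$; the bound $|\chi_n(x)|\leq K\,p(x)$ shows that $(\chi_n)$ is equicontinuous, and $\chi_n(y_n)=1$ guarantees that $(\chi_n)$ is not $\beta^\ast$-null.

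The main obstacle is to ensure $\chi_n\to 0$ in $\sigma(E',E)$, equivalently $\widetilde{\xi}_n(w)\to 0$ for every $w$ in the dense image $\pi(E)\subseteq\widehat{E}_p$: Hahn--Banach extensions do not automatically preserve weak$^\ast$ convergence. Here the hypotheses of quasi-completeness and metrizability of separable bounded subsets are expected to be essential. The set $D:=\overline{\acx}\{y_n\}$ is a bounded, separable subset of $E$, hence (by hypothesis) metrizable, and complete by quasi-completeness, so the associated Banach space $E_D$ is separable. This separability should allow one to metrize the weak$^\ast$ topology on the equicontinuous polar $K\{p\leq 1\}^\circ$ restricted to $\pi(E_D)$, after which a diagonal extraction among the Hahn--Banach extensions (or a further subsequence of $(\chi_n)$) picks out a sequence that is weak$^\ast$-null on a dense subset of $\widehat{E}_p$; norm-boundedness then propagates weak$^\ast$-convergence to all of $\widehat{E}_p$ and in particular to $\pi(E)$, yielding $\chi_n\to 0$ in $\sigma(E',E)$ as required. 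I expect this lifting step to be the most delicate part of the full proof.
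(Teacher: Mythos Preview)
Your forward direction is correct and amounts to a self-contained proof of what the paper obtains by citation: the paper notes $\beta^\ast(E',E)=\beta(E',E)$ by barrelledness and then quotes from \cite{Gabr-free-resp} that strong duals of Montel spaces have the Schur property, which is exactly your Ascoli/Banach--Steinhaus computation. For the converse the paper does not construct the witnessing sequence at all; it simply invokes Proposition~3.7 of \cite{Gabr-free-resp}, which under precisely the present hypotheses (reflexive, quasi-complete, every separable bounded subset metrizable) furnishes a $\sigma(E',E)$-convergent sequence in $E'$ that fails to converge in $\beta(E',E)=\beta^\ast(E',E)$.

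Your direct construction has the right shape but two genuine gaps. First, quasi-completeness alone does not give an Eberlein--\v{S}mulian theorem in a general lcs; extracting a weakly convergent subsequence from $(x_n)$ inside a weakly compact set already requires the metrizability hypothesis on separable bounded sets, which you only invoke at the very end. Second, and more seriously, the Hahn--Banach extensions $\widetilde{\xi}_n$ are weak$^\ast$-null only on $M=\overline{\spn}\{\pi(y_n)\}$, and $M$ need not be dense in $\widehat{E}_p$; uniform boundedness therefore does not propagate convergence to all of $\pi(E)$. Your proposed fix does not close this: metrizability of $D=\overline{\acx}\{y_n\}$ in the topology of $E$ does not make the finer normed space $E_D$ separable, and even if it did, separability of $E_D$ would not metrize $\sigma(E',E)$ on the equicontinuous polar $\{p\le 1\}^\circ$, which is what a sequential extraction in $E'_{w^\ast}$ actually needs. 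Carrying this step through rigorously is exactly the content of the cited external result.
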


\begin{proof}
Recall that each reflexive space is barrelled, see \cite[Proposition~11.4.2]{Jar}. Now we note that $\beta^\ast(E',E)=\beta(E',E)$ by Corollary 10.2.2 of \cite{Jar}.

To prove the ``only if'' part, suppose for a contradiction that $E$ is Montel. Since the strong dual of a Montel space is also Montel (\cite[Proposition~11.5.4]{Jar}), the strong dual space $E'_\beta$  is Montel and, by Proposition 2.3 of \cite{Gabr-free-resp}, $E'_\beta$ has the Schur property. As every Montel space is also reflexive, the Schur property exactly means that the identity map $E'_{w^\ast}\to E'_\beta=E'_{\beta^*}$ is sequentially continuous and hence $E$ does not have the JNP. This contradiction shows that $E$ is not Montel.
\smallskip

To prove the ``if'' part, assume that $E$ is not Montel. Then, by Proposition 3.7 of \cite{Gabr-free-resp}, $E'$ contains a $\sigma(E',E)$-convergent sequence which does not converge in $ \beta(E',E)=\beta^\ast(E',E)$. Thus $E$ has the JNP.
\end{proof}

\begin{proposition} \label{p:bar-weak-JNP}
Let $E$ be a barrelled space, and let $\TTT$ be a locally convex topology on $E$ compatible with the duality $(E,E')$. Then the space $E$ has the JNP if and only if $(E,\TTT)$ has the JNP.
\end{proposition}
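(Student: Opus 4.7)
The plan is to show that the topologies $\sigma(E',E)$ and $\beta^\ast(E',E)$ on $E'$ both depend only on the dual pair $(E,E')$ and not on the particular compatible topology carried by $E$. Once this is established, the identity map $E'_{w^\ast}\to E'_{\beta^\ast}$ appearing in Definition \ref{def:JNP} is literally the same map whether we take the ground topology on $E$ to be the given one or $\TTT$, and so its sequential continuity (or the failure thereof) is inherited from one setting to the other, yielding the equivalence.

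The invariance of $\sigma(E',E)$ is immediate from the assumption that $\TTT$ is compatible with the duality: one has $(E,\TTT)' = E'$, so the weak$^\ast$ topology is the same. The key step is the invariance of $\beta^\ast(E',E)$, for which it suffices to prove that the family of barrel-bounded subsets of $E$ coincides with the family of barrel-bounded subsets of $(E,\TTT)$; this in turn reduces to showing that the family of barrels is the same. A barrel is an absolutely convex, absorbing, closed subset of $E$. Absorbency is a purely algebraic condition independent of topology. For an absolutely convex set $A$, the $\TTT$-closure of $A$ coincides with its $\sigma(E,E')$-closure by the bipolar theorem (equivalently, by the Mackey--Arens theorem, all locally convex topologies on $E$ compatible with $(E,E')$ share the same closed absolutely convex subsets, namely the sets of the form $A^{\circ\circ}$). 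Hence the barrels of $E$ coincide with the barrels of $(E,\TTT)$; consequently the barrel-bounded sets coincide, and their polars provide the same neighbourhood base at $0$ for $\beta^\ast(E',E)$ in both settings.

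I do not anticipate any serious obstacle, since the argument is a direct consequence of the fact that being a barrel is intrinsic to the dual pair. The barrelled hypothesis on $E$ is not used in the core reduction above; combined with Mackey's theorem (bounded sets are also invariants of compatible topologies), it has the additional pleasant consequence that $\beta^\ast(E',E) = \beta(E',E)$ for both $E$ and $(E,\TTT)$, so the proposition can equivalently be read as the statement that, for a barrelled $E$, the failure of sequential continuity of $E'_{w^\ast}\to E'_\beta$ is determined by the dual pair alone.
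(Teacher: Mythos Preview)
Your proof is correct and actually a little cleaner than the paper's. Both arguments hinge on the Mackey--Arens fact that closed absolutely convex sets coincide for all compatible topologies; you invoke it directly to conclude that the \emph{barrels} of $E$ and of $(E,\TTT)$ are the same, whence the barrel-bounded sets and therefore the $\beta^\ast$ topologies coincide. The paper instead uses only one direction of this (a barrel in $(E,\TTT)$ is a barrel in $E$), then appeals to the barrelledness of $E$ to make such a barrel a neighbourhood of zero, and finally uses Mackey's theorem on bounded sets to get the same conclusion.

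The payoff of your route is exactly what you observed: the barrelledness assumption plays no role in the equivalence itself, so the proposition holds for \emph{any} locally convex space $E$ and any topology $\TTT$ compatible with $(E,E')$. The paper's approach, while slightly less economical, makes the additional fact $\beta^\ast(E',E)=\beta(E',E)=\beta^\ast\big(E',(E,\TTT)\big)$ explicit along the way, which is a point they want to record.
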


\begin{proof}
Note that $(E,\TTT)'=E'$ and hence $(E,\TTT)'_{w^\ast} =E'_{w^\ast}$. So to prove the proposition it suffices to show that also $(E,\TTT)'_{\beta^\ast} =E'_{\beta^\ast}$. To this end, we shall show that $\beta^\ast \big(E',(E,\TTT)\big)=\beta(E',E)=\beta^\ast(E',E)$. It is clear that these equalities hold true if the spaces $E$ and $(E,\TTT)$ have the same barrel-bounded sets. Since $E$ is barrelled,  a subset of $E$ is barrel-bounded if and only if it is bounded. As $E$ and $(E,\TTT)$ have the same bounded sets, these spaces have the same  barrel-bounded sets if we shall show  that every bounded subset $B$ of $(E,\TTT)$ is barrel-bounded. If $U$ is a barrel in $(E,\TTT)$, it is also a barrel in $E$ and hence $U$ is a neighborhood of zero in $E$. Hence there is $n\in\w$ such that $B\subseteq nU$. Thus $B$ is barrel-bounded.
\end{proof}

Below we give an independent proof of Theorem \ref{t:BLV-theorem}. Our proof, as well as the proof of Theorem \ref{t:BLV-theorem}, also essentially uses the following result from \cite{Lind-Schlump}:  {\em a Fr\'{e}chet space $E$ is reflexive if the identity map $E'_{{w^\ast}}\to E'_\beta$ is sequentially continuous}.

\begin{theorem} \label{t:BLV-theorem-JNP}
For a Fr\'{e}chet space $E$, the following assertions are equivalent:
\begin{enumerate}
\item[{\rm (i)}] $E$ is not  Montel.
\item[{\rm (ii)}] $E$ has the JNP.
\item[{\rm (iii)}] $E_w$ has the JNP.
\end{enumerate}
\end{theorem}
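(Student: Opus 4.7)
The plan is to reduce the three-way equivalence to two already-established tools: Proposition \ref{p:bar-weak-JNP} (which handles (ii)$\Leftrightarrow$(iii) almost for free) and Theorem \ref{t:JNP-Montel} (which handles (i)$\Leftrightarrow$(ii) once reflexivity is available). The single nontrivial input we need beyond these is the Lindstr\"om--Schlumprecht fact (quoted just before the theorem) that for a Fr\'echet space, sequential continuity of $E'_{w^\ast}\to E'_\beta$ forces reflexivity.

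First I would dispose of (ii)$\Leftrightarrow$(iii). Since a Fr\'echet space is barrelled, and since the weak topology $\sigma(E,E')$ is a locally convex topology on $E$ compatible with the duality $(E,E')$, Proposition \ref{p:bar-weak-JNP} applies directly to $(E,\sigma(E,E'))=E_w$ and yields the equivalence. No further work is needed here.

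Next I would prove (ii)$\Rightarrow$(i) by contrapositive. Suppose $E$ is Montel. Every Fr\'echet space is metrizable and complete, hence quasi-complete with every subset metrizable; moreover a Montel space is reflexive. Thus Theorem \ref{t:JNP-Montel} applies and gives: $E$ has the JNP iff $E$ is not Montel. As $E$ is Montel, $E$ fails the JNP. (One can equivalently invoke the Schur-property argument used inside the proof of Theorem \ref{t:JNP-Montel}: the strong dual of a Montel space is Montel, hence has the Schur property, so the identity $E'_{w^\ast}\to E'_\beta$ is sequentially continuous, and since $\beta^\ast(E',E)=\beta(E',E)$ for Fr\'echet spaces, $E$ does not have the JNP.)

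Finally, for (i)$\Rightarrow$(ii) I would again argue by contrapositive. Assume $E$ does not have the JNP. Because $E$ is Fr\'echet, $\beta^\ast(E',E)=\beta(E',E)$, so the identity map $E'_{w^\ast}\to E'_\beta$ is sequentially continuous. By the Lindstr\"om--Schlumprecht theorem quoted above, $E$ is then reflexive. Now $E$ satisfies the hypotheses of Theorem \ref{t:JNP-Montel} (reflexive, quasi-complete, metrizable), which gives: not having the JNP is equivalent to being Montel. Hence $E$ is Montel, completing the contrapositive. The main obstacle in the whole argument is this last step: without the Lindstr\"om--Schlumprecht reflexivity input, one cannot invoke Theorem \ref{t:JNP-Montel}, and the direct construction of a weak$^\ast$ null sequence of unit-norm functionals on a non-Montel Fr\'echet space (which is essentially the original Bonet--Lindstr\"om--Valdivia theorem) is the genuinely hard ingredient being bootstrapped through.
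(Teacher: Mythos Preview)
Your proof is correct and follows essentially the same route as the paper: Proposition~\ref{p:bar-weak-JNP} for (ii)$\Leftrightarrow$(iii), and the combination of Theorem~\ref{t:JNP-Montel} with the Lindstr\"om--Schlumprecht reflexivity result for (i)$\Leftrightarrow$(ii). The only cosmetic difference is that you argue both implications (i)$\Leftrightarrow$(ii) by contrapositive, whereas the paper proves (i)$\Rightarrow$(ii) directly via a case split on whether $E'_{w^\ast}\to E'_\beta$ is sequentially continuous; the content is identical.
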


\begin{proof}
(i)$\Rightarrow$(ii) Let $E$ be a non-Montel space. Since the Fr\'echet space $E$ is barelled, $E'_{\beta}=E'_{\beta^*}$. If the identity map $E'_{w^\ast}\to E'_\beta=E'_{\beta^*}$ is not sequentially continuous, then $E$ has the JNP by definition. If this map is sequentially continuous, then, by \cite{Lind-Schlump}, $E$ is reflexive. Finally, Theorem \ref{t:JNP-Montel} implies that $E$ has the JNP.

(ii)$\Rightarrow$(i)  Assume that $E$ has the JNP. Then $E$ is not Montel by Theorem \ref{t:JNP-Montel} (recall that every Montel space is reflexive).

(iii)$\Leftrightarrow$(ii) follows from Proposition \ref{p:bar-weak-JNP}. %The equivalence follows from the equality $(E_w)'=E'$, the equivalence (i)$\Leftrightarrow$(ii) and the equality $\beta^\ast (E',E_w)=\beta(E',E)=\beta^\ast(E',E)$ holding for each barrelled space $E$. To prove this equality, it suffices to show that every bounded subset $B$ of $E_w$ (and hence of $E$) is barrel-bounded. If $U$ is a barrel in $E_w$, it is also a barrel in $E$ and hence $U$ is a neighborhood of zero in $E$. Hence there is $n\in\w$ such that $B\subseteq nU$. Thus $B$ is barrel-bounded.
\end{proof}

%\section{An operator characterizations of the JNP} \label{sec:Oper-JNP}

%%%%%%%%%%%%%%%%%%%%%%%%%%%%%%%%%%%%%%%%%%%%%%%
%%%%%%%%%%%%%%%%%%%%%%%%%%%%%%%%%%%%%%%%%%%%%%%
%%%%%%%%%%%%%%%%%%%%%%%%%%%%%%%%%%%%%%%%%%%%%%%

Now we present a characterization of the JNP in the terms of $C_p^0(\w)$-valued operators.
An operator $T:X\to Y$ between locally convex spaces is {\em $\beta$-to-$\beta$ precompact} if for any barrel-bounded set $B\subseteq X$ the image $T(B)$ is barrel-precompact in $Y$. {Recall that $C_p^0(\w)$  denotes the subspace of $\IF^\w$ consisting of functions $\w\to\IF$ that tend to zero at infinity. We shall use also the following well known description of precompact subsets of the Banach space $c_0$, where $e'_n$ is the $n$th coordinate functional of $c_0$.

\begin{proposition} \label{p:precompact-c0}
A subset $A$ of $c_0$ is precompact if and only if $\lim\limits_{n\to\infty}\|e'_n\|_A=0$.
\end{proposition}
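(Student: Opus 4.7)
The plan is to prove the two implications separately via direct approximation arguments, using the observation that $\|e'_n\|_A=\sup\{|x(n)|:x\in A\}$ (the forced inclusion of $0$ in the defining set contributes nothing, since $e'_n(0)=0$).

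For the necessity direction, I would fix $\varepsilon>0$ and exploit a finite $(\varepsilon/2)$-net $\{y_1,\dots,y_k\}\subseteq c_0$ for $A$, which exists by precompactness. Because each $y_j$ belongs to $c_0$, there is an $N$ such that $|y_j(n)|<\varepsilon/2$ for every $n\geq N$ and every $j\leq k$. For an arbitrary $x\in A$ one can choose $j$ with $\|x-y_j\|_\infty\leq \varepsilon/2$, and then for $n\geq N$ the triangle inequality at the single coordinate $n$ gives $|x(n)|<\varepsilon$. Taking the supremum over $x\in A$ yields $\|e'_n\|_A\leq \varepsilon$ for all $n\geq N$, i.e.\ $\|e'_n\|_A\to 0$.

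For the sufficiency direction, my approach is a truncation argument. First I would observe that the hypothesis $\|e'_n\|_A\to 0$ already forces $A$ to be bounded in $c_0$: each $\|e'_n\|_A$ is finite (it appears as a term of a convergent sequence) and all but finitely many of them are at most $1$, so $\sup_{x\in A}\|x\|_\infty<\infty$. Given $\varepsilon>0$, pick $N$ with $\|e'_n\|_A<\varepsilon/2$ for every $n\geq N$, and consider the coordinate projection $P_N\colon c_0\to c_0$, $P_N x=\sum_{i<N}x(i)e_i$. Two features drive the argument: (a) $P_N(A)$ is a bounded subset of the finite-dimensional space $\mathrm{span}(e_0,\dots,e_{N-1})$ and is therefore precompact in $c_0$; and (b) $\|x-P_N x\|_\infty=\sup_{n\geq N}|x(n)|\leq \varepsilon/2$ for every $x\in A$ by the choice of $N$. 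Combining a finite $(\varepsilon/2)$-net for $P_N(A)$ with (b) via the triangle inequality then produces a finite $\varepsilon$-net for $A$, establishing precompactness.

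There is no serious obstacle in either direction; both are standard finite-approximation arguments. The only point where a little care is needed is in the sufficiency direction, where one must first extract boundedness of $A$ from the limit condition before invoking the finite-dimensional compactness of $P_N(A)$.
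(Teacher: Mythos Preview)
The paper does not supply a proof of this proposition; it is introduced as a ``well known description'' and then used without argument. Your proof is correct and is the standard one.

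The only delicate spot, which you already flag, is the extraction of boundedness of $A$ in the sufficiency direction. Your justification (``each $\|e'_n\|_A$ is finite since it appears as a term of a convergent sequence'') presumes the hypothesis $\lim_n\|e'_n\|_A=0$ is a limit of a sequence of real numbers. If one reads $\|\cdot\|_A$ as taking values in $[0,\infty]$, as the paper does elsewhere, the literal hypothesis would not exclude sets such as $A=\{ke_0:k\in\mathbb N\}$, for which $\|e'_n\|_A=0$ for all $n\ge 1$ yet $A$ is not precompact. This is a quibble with the \emph{statement} rather than with your proof; the intended reading is plainly the one you adopted, and under it your argument goes through.
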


Now we are ready to prove an operator characterization of the JNP.
%Denote by $C_p^0(\w, \IF)$ the vector space $c_0(\w)$ over the field $\IF$ endowed with the pointwise topology.
\begin{theorem} \label{t:JNP-characterization}
For a locally convex space $E$ over the field\/ $\IF$ the following conditions are equivalent:
\begin{enumerate}
\item[{\rm (i)}] $E$ has the Josefson-Nissenzweig property;
\item[{\rm (ii)}]  there exists a continuous operator $T:E\to C_p^0(\w)$, which is not $\beta$-to-$\beta$ precompact.
\end{enumerate}
\end{theorem}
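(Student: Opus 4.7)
The strategy is to exploit the natural correspondence between sequences $\{\chi_n\}_{n\in\w}\subseteq E'$ and continuous operators $T\colon E\to C^0_p(\w)$, given by $T(x):=(\chi_n(x))_{n\in\w}$ in one direction and $\chi_n:=e'_n\circ T$ in the other. Under this correspondence I expect the ``source'' condition ``$\chi_n\to 0$ in $\sigma(E',E)$'' to match the requirement that the image of $T$ sits inside $C^0_p(\w)$, while the identity $\|\chi_n\|_B=\|e'_n\|_{T(B)}$ should tie together the ``target'' conditions ``$\chi_n\not\to 0$ in $\beta^\ast(E',E)$'' and ``$T$ is not $\beta$-to-$\beta$ precompact''.

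Before running the correspondence, I would establish the following auxiliary claim: \emph{a subset $A\subseteq C^0_p(\w)$ is barrel-precompact in $C^0_p(\w)$ if and only if it is norm-precompact in the Banach space $c_0$}. For one direction I would check that the closed unit ball $B_{c_0}$ is itself a barrel in $C^0_p(\w)$: it is absolutely convex and absorbing, and is pointwise-closed since pointwise limits preserve the inequality $|x(n)|\le 1$. For the other direction I would show that any barrel $U$ in $C^0_p(\w)$ is actually a barrel of the Banach space $c_0$: it is absolutely convex and absorbing on the same underlying set, and it is $\|\cdot\|_\infty$-closed because norm-convergence implies pointwise convergence, so a pointwise-closed set is norm-closed. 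Barrelledness of $c_0$ then makes $U$ a norm-neighborhood of $0$, so any $c_0$-norm-precompact set is covered by finitely many translates of $U$. Combined with Proposition~\ref{p:precompact-c0}, this reformulates barrel-precompactness of $A\subseteq C^0_p(\w)$ as the condition $\lim_{n\to\infty}\|e'_n\|_A=0$.

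To prove $(i)\Rightarrow(ii)$, I would use the JNP to fix $\chi_n\in E'$ with $\chi_n\to 0$ in $\sigma(E',E)$ together with a barrel-bounded set $B\subseteq E$ and $\varepsilon>0$ such that $\|\chi_n\|_B\ge\varepsilon$ along some subsequence. Then $T(x):=(\chi_n(x))_{n\in\w}$ is continuous into $\IF^\w$ by continuity of each $\chi_n$, and its image lies in $C^0_p(\w)$ because $\chi_n(x)\to 0$ pointwise. Since $\|e'_n\|_{T(B)}=\|\chi_n\|_B\not\to 0$, the auxiliary claim prevents $T(B)$ from being barrel-precompact in $C^0_p(\w)$, so $T$ is not $\beta$-to-$\beta$ precompact.

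For $(ii)\Rightarrow(i)$, given such $T$ and a barrel-bounded $B\subseteq E$ with $T(B)$ not barrel-precompact, I would set $\chi_n:=e'_n\circ T\in E'$, which is continuous because each $e'_n$ is continuous on $C^0_p(\w)$ in the pointwise topology. Since $T(x)\in C^0_p(\w)$, $\chi_n(x)=T(x)(n)\to 0$ for every $x$, so $\chi_n\to 0$ in $\sigma(E',E)$. The auxiliary claim gives $\|e'_n\|_{T(B)}\not\to 0$, hence $\|\chi_n\|_B\not\to 0$, and since the polar $B^\circ$ is a $\beta^\ast$-zero-neighborhood, $\chi_n\not\to 0$ in $\beta^\ast(E',E)$; thus $E$ has the JNP. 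The only substantive step in the whole argument, and hence the main obstacle I expect, is the auxiliary claim identifying barrel-precompactness in $C^0_p(\w)$ with norm-precompactness in $c_0$; once this is in hand, both implications reduce to routine bookkeeping under the $T\leftrightarrow\{\chi_n\}$ correspondence.
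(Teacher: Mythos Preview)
Your proposal is correct and follows essentially the same route as the paper: both hinge on the identification of the barrel topology $\beta(Y,Y')$ on $Y=C_p^0(\w)$ with the $c_0$-norm topology (the paper cites this from \cite{BG-sGP}, while you prove it directly via the two observations that $B_{c_0}$ is a pointwise-closed barrel and that every $C_p^0(\w)$-barrel is a $c_0$-barrel), after which the correspondence $T\leftrightarrow\{\chi_n\}$ and Proposition~\ref{p:precompact-c0} do the work. One small difference: in (i)$\Rightarrow$(ii) the paper verifies non-precompactness of $T(B)$ by an explicit construction of a $\tfrac12$-separated sequence, whereas your direct appeal to Proposition~\ref{p:precompact-c0} via $\|e'_n\|_{T(B)}=\|\chi_n\|_B\not\to 0$ is cleaner and fully adequate.
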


\begin{proof}
(i)$\Rightarrow$(ii) Assume that $E$ has the JNP, and take any null sequence $\{\mu_n\}_{n\in\w}\subseteq E'_{w^\ast}$ that does not converge to zero in the topology $\beta^\ast(E',E)$. Then there exist a barrel-bounded set $B\subseteq E$ and $\e>0$ such that the set $\{n\in\w: \|\mu_n\|_B >\e\}$ is infinite. Replacing $\{\mu_n\}_{n\in\w}$ by a suitable subsequence and multiplying it by $\tfrac{1}{\e}$, we can assume that $ \|\mu_n\|_B >1$ for every $n\in\w$. The $\sigma(E',E)$-null sequence $\{\mu_n\}_{n\in\w}$ determines the continuous operator $T:X\to C_p^0(\w)$ defined by  $T(x):= (\mu_n(x))_{n\in\w}$. For simplicity of notation, set $Y:=C_p^0(\w)$. By Lemma 2.4 of \cite{BG-sGP}, %\ref{l:JNP-norming},
the strong topology $\beta(Y,Y')$ on $Y$ is generated by the norm $\|(x_n)_{n\in\w}\|=\sup_{n\in\w}|x_n|$  (so $\big(Y,\beta(Y,Y')\big)=c_0$). To see that the set $T(B)$ is not precompact in the norm topology, for every $k\in\w$ we can inductively choose a number $n_k\in\w$ and a point $b_k\in B$ such that the following conditions are satisfied:
\begin{itemize}
\item $|\mu_n(b_i)|<\frac{1}{2}$ for any $i<k$ and any $n\ge n_k$;
\item $|\mu_{n_k}(b_k)|>1$;
\end{itemize}
(this is possible because $\lim_n \mu_n(b)=0$ and $\|\mu_n\|_B >1$).
Then for any $i<k$, we have
\[
\|T(b_i)-T(b_k)\|\ge |\mu_{n_k}(b_k) -\mu_{n_k}(b_i)|>1-\tfrac{1}{2}>\tfrac{1}{2},
\]
which implies that the sequence $\{T(b_k)\}_{k\in\w}$ has no accumulation point in the Banach space $c_0$, and hence it cannot be precompact in $c_0$.
\smallskip

(ii)$\Rightarrow$(i) Assume that $E$ admits a continuous operator $T:E\to C_p^0(\w)$ such that $T$ is not $\beta$-to-$\beta$ precompact. Then for some barrel-bounded set $B\subseteq X$, the image $T(B)$ is not barrel-precompact in $C_p^0(\w)$.

Observe that any barrel $B$ in $C_p^0(\w)$ is also a barrel in the Banach space $c_0$, and hence $B$ is a $c_0$-neighborhood of zero. This implies that a subset $A$ of $C_p^0(\w)$ is barrel-bounded (resp. barrel-precompact) if and only if it is a bounded (resp. precompact) subset in $c_0$.
Therefore $T(B)$ is not precompact in $c_0$. By Proposition \ref{p:precompact-c0} this means that $\|e'_n\|_{T(B)}\not\to 0$. For every $n\in\w$, set $\chi_n:=e'_n\circ T$. Since $e'_n\to 0$ in the weak$^\ast$ topology of $C_p^0(\w)'$, we obtain that $\chi_n\to 0$  in the weak$^\ast$ topology of $E'$. Since
\[
\| \chi_n\|_B =  \|e'_n\|_{T(B)} \not\to 0 \quad \mbox{ as } \; n\to\infty,
\]
the sequence $\{ \chi_n\}_{n\in\w}$ does not converge to zero in the topology $\beta^\ast(E',E)$. Thus $E$ has the JNP.
\end{proof}

Recall that a locally convex space $E$ is {\em $c_0$-barrelled\/} if every $\sigma(E',E)$-null sequence is equicontinuous.  It is well known that each Fr\'echet space is barrelled and each barrelled locally convex space is $c_0$-barrelled. %Note that $C_p(X)$ always carries its weak topology, and the space $C_p(X)$ is $c_0$-barrelled if and only if every functionally bounded subset of $X$ is finite (see, for example, \cite{GK-DP}).

\begin{theorem} \label{t:JNP-c0-barreled}
Let $E$ be a $c_0$-barrelled space such that $E=E_w$. Then  $E$ does not have the JNP.
\end{theorem}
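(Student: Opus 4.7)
The plan is to show that the identity map $E'_{w^*}\to E'_{\beta^*}$ is sequentially continuous, which by Definition~\ref{def:JNP} means $E$ lacks the JNP. So I start with an arbitrary sequence $(\chi_n)_{n\in\w}$ in $E'$ converging to $0$ in $\sigma(E',E)$ and aim to show $\chi_n\to 0$ in $\beta^*(E',E)$.

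First I invoke the $c_0$-barrelled hypothesis, which tells me that $(\chi_n)_{n\in\w}$ is equicontinuous. Hence there is a neighborhood of zero $U\subseteq E$ with $\chi_n\in U^\circ$ for every $n$. Since by assumption $E=E_w$, the topology on $E$ is $\sigma(E,E')$, so I may further arrange that $U$ has the standard weak form $U=\bigcap_{i=1}^k\{x\in E:|f_i(x)|\le 1\}=\{f_1,\dots,f_k\}^\circ$ for some $f_1,\dots,f_k\in E'$.

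The crux is to observe that $U^\circ$ is finite-dimensional. By the bipolar theorem, $U^\circ=\{f_1,\dots,f_k\}^{\circ\circ}$ coincides with the $\sigma(E',E)$-closed absolutely convex hull of the finite set $\{f_1,\dots,f_k\}$; since this hull already lies in (and is closed in) the finite-dimensional subspace $F:=\spn\{f_1,\dots,f_k\}\subseteq E'$, it is just the absolutely convex hull of these vectors. Consequently the whole sequence $(\chi_n)_{n\in\w}$ is contained in $F$.

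To conclude, recall that all Hausdorff linear topologies on a finite-dimensional space coincide, and $\beta^*(E',E)$ is Hausdorff since it refines $\sigma(E',E)$. Therefore the restrictions of $\sigma(E',E)$ and $\beta^*(E',E)$ to $F$ agree, and $\chi_n\to 0$ in $\beta^*(E',E)$, as required. The only mildly delicate moment is the finite-dimensionality of $U^\circ$, but this is immediate from the bipolar theorem (and is in fact the standard reason why equicontinuous subsets of the dual of a weak space are finite-dimensional); no genuine obstacle is anticipated.
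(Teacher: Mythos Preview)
Your proof is correct and follows essentially the same approach as the paper: use $c_0$-barrelledness to confine the weak$^\ast$ null sequence to an equicontinuous set $U^\circ$, exploit $E=E_w$ to see that $U^\circ$ is finite-dimensional, and conclude by uniqueness of Hausdorff vector topologies on finite-dimensional spaces. The only cosmetic difference is that the paper realizes $E$ concretely as a dense subspace of $\IF^X$ (with $X$ a Hamel basis of $E'$) and reads off that $U^\circ$ has finite support, whereas you obtain the same conclusion abstractly via the bipolar theorem.
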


\begin{proof}
It is well known that the space $E$  is a dense linear subspace of $\IF^X$ for some set $X$ (for example, $X$ can be chosen to be a Hamel basis of $E'$). Hence we can identify the dual space $E'$ with the linear space of all functions $\mu:X\to \IF$ whose support $\supp(\mu):=\mu^{-1}(\IF\SM \{0\})$ is finite.%finitely supported sign-measures on $X$.

To see that $E$ does not have the JNP, we have to check that the identity map $E'_{w^\ast}\to E'_{\beta^\ast}$ is sequentially continuous. Fix any sequence $\M=\{\mu_n\}_{n\in\w}\subseteq E'$ that converges to zero in the topology $\sigma(E',E)$. Since $E$ is $c_0$-barrelled, the sequence $\M$ is equicontinuous, which means that $\M\subseteq U^\circ$ for some open neighborhood $U\subseteq E$ of zero. We can assume that $U$ is of the basic form:
\[
U=\{f\in E:\;|f(x)|<\e \; \mbox{ for all } \; x\in F \}
\]
for some finite set $F\subseteq X$ and some $\e>0$. The inclusion $\M\subseteq U^\circ$ implies that
$\bigcup_{\mu\in\M} \supp(\mu)\subseteq F$, and hence $\M$ is a subset of the finite-dimensional subspace $E'_F:=\{\mu\in E':\supp(\mu)\subseteq F\}$. Since any finite-dimensional locally convex space carries a unique locally convex topology, the sequence $\M\subseteq E'_F$ converges to zero also in the topology $\beta^\ast(E',E)$.
\end{proof}

A subset $B$ of a topological space $X$ is {\em functionally bounded} if for every continuous function $f:X\to\IR$ the image $f(B)$ is a bounded set in the real line.
By \cite{GK-DP}, for a Tychonoff space $X$, the function space $C_p(X)$ is $c_0$-barrelled if and only if every functionally bounded subset of $X$ is finite. Combining this characterization with Theorem~\ref{t:JNP-c0-barreled}, we obtain the following assertion.

%Note that $C_p(X)$ always carries its weak topology, and the space $C_p(X)$ is $c_0$-barrelled if and only if  (see, for example, ).

\begin{proposition}
Let $X$ be a Tychonoff space such that every functionally bounded subset of $X$ is finite. Then the function space $C_p(X)$ does not have the JNP.
\end{proposition}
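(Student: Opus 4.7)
The plan is to reduce the proposition to a direct application of Theorem \ref{t:JNP-c0-barreled}, so I need to verify its two hypotheses for $E = C_p(X)$ under the given assumption on $X$.

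First, I would invoke the characterization from \cite{GK-DP} quoted immediately before the proposition: since every functionally bounded subset of $X$ is finite, the space $C_p(X)$ is $c_0$-barrelled. This is stated in the excerpt, so I can simply cite it.

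Second, I need to show $C_p(X) = C_p(X)_w$, i.e., that the pointwise convergence topology on $C(X)$ coincides with its weak topology. For this I would identify $C_p(X)'$ with the linear span $L_p(X)$ of the Dirac measures $\{\delta_x : x \in X\}$; this is standard and already implicit in the earlier discussion (the dual of $C_p(X)$ consists of finitely supported sign-measures). The pointwise topology on $C_p(X)$ is by definition the initial topology generated by the evaluation functionals $\delta_x$, so it is the coarsest locally convex topology compatible with the duality $(C_p(X), L_p(X))$. Hence it agrees with $\sigma(C_p(X), C_p(X)')$, which is what $C_p(X)_w$ denotes.

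With both hypotheses verified, Theorem \ref{t:JNP-c0-barreled} applies and yields that $C_p(X)$ does not have the JNP, finishing the proof. There is no real obstacle: the entire content of the proposition is the combination of the cited $c_0$-barrelledness characterization with the already proved Theorem \ref{t:JNP-c0-barreled}, and the minor point to verify — the self-duality of $C_p(X)$ with respect to weak topology — is routine from the construction of $C_p(X)$ as a subspace of $\IF^X$ with the product topology.
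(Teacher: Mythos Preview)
Your proposal is correct and follows exactly the paper's approach: the paper derives the proposition immediately from the $c_0$-barrelledness characterization in \cite{GK-DP} combined with Theorem~\ref{t:JNP-c0-barreled}, and you have simply made explicit the (well-known) fact that $C_p(X)$ carries its weak topology, which the paper uses elsewhere without further comment.
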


Let $X$ be a Tychonoff space.
Let $B_0(X):=C_p(X)$, and for every countable ordinal $\alpha\geq 1$, let $B_\alpha(X)$ be the family of all functions $f:X\to \mathbb{F}$ that are pointwise limits of sequences $\{f_n\}_{n\in\w}\subseteq \bigcup_{\beta<\alpha}B_\beta(X)$ in the Tychonoff product $\IF^X$.
%For every countable ordinal $\alpha$, let $B_\alpha(X)$ be the family of all functions $f:X\to \IF$ that are pointwise limits of sequences $\{f_n\}_{n\in\w}\subseteq C_p(X)\cup\bigcup_{\beta<\alpha}B_\beta(X)$ in the Tychonoff product $\IF^X$.
All the spaces $B_\alpha(X)$ are endowed with the topology of pointwise convergence, inherited from the Tychonoff product $\IF^X$. The classes $B_\alpha(X)$ of Baire-$\alpha$ functions play an essential role in Functional Analysis, see for example \cite{BFT,Od-Ros}.

\begin{proposition} \label{c:JNP-Baire}
For every Tychonoff space $X$ and each nonzero countable ordinal $\alpha$, the function space $B_\alpha(X)$ does not have the JNP.
\end{proposition}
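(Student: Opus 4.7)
The plan is to apply Theorem~\ref{t:JNP-c0-barreled} to $E := B_\alpha(X)$. Since $B_\alpha(X)$ sits inside $\IF^X$ with the pointwise topology, its dual $B_\alpha(X)'$ consists of finite linear combinations of evaluation functionals $\delta_x$, so the weak topology $\sigma(B_\alpha(X), B_\alpha(X)')$ coincides with the pointwise topology and $B_\alpha(X) = B_\alpha(X)_w$. It therefore suffices to prove that $B_\alpha(X)$ is $c_0$-barrelled.

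To this end, I would take a $w^\ast$-null sequence $\{\mu_n\}_{n\in\w}$ in $B_\alpha(X)'$, write $\mu_n = \sum_{x \in F_n} c^n_x \delta_x$ with $F_n\subseteq X$ finite, set $S := \bigcup_n F_n$, and transfer the problem to the Fr\'echet--Schwartz space $\IF^S$ via the restriction map $r \colon B_\alpha(X) \to \IF^S$. Each $\mu_n$ factors as $\tilde\mu_n \circ r$ for a functional $\tilde\mu_n \in (\IF^S)'$. Because $\IF^S$ is a countable product of lines, Banach--Steinhaus forces any $w^\ast$-null sequence in $(\IF^S)'$ to be equicontinuous, i.e., supported in some finite $F \subseteq S$ with $\sup_n\sum_{x\in F}|c^n_x|<\infty$; such an $F$ immediately witnesses the equicontinuity of $\{\mu_n\}$ in $B_\alpha(X)'$ as well.

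The key step is therefore to show that $\{\tilde\mu_n\}$ is $w^\ast$-null in $(\IF^S)'$, which I plan to reduce to an extension lemma: for every $g \in \IF^S$ there is $\tilde g \in B_\alpha(X)$ with $\tilde g|_S = g$; once this is in hand, $\tilde\mu_n(g) = \mu_n(\tilde g) \to 0$. I would construct $\tilde g$ using the hypothesis $\alpha\geq 1$: enumerate $S = \{s_n\}_{n\in\w}$, apply Tychonoff separation for finite subsets to obtain $\phi_{n,k}\in C(X,[0,1])$ with $\phi_{n,k}(s_n)=1$ and $\phi_{n,k}(s_i)=0$ for $i\leq k$, $i\neq n$, and set $f_k := \sum_{n\leq k} g(s_n)\phi_{n,k}\in C(X)$, so that $f_k(s_i)=g(s_i)$ whenever $k\geq i$. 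By arranging the supports of the $\phi_{n,k}$'s to shrink appropriately and, if needed, passing through several stages of pointwise limits up the Baire hierarchy, the resulting $\tilde g$ lies in $B_\alpha(X)$ and extends $g$.

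The principal obstacle is executing the extension lemma in an arbitrary Tychonoff space, where singletons need not be $G_\delta$ and countable subsets need not be $C$-embedded: the higher one climbs in the Baire hierarchy the more slack one has, so the case $\alpha\geq 2$ is markedly easier than $\alpha=1$, in which $\tilde g$ must be realised as a genuine pointwise limit of continuous functions.
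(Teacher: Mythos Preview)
Your overall strategy---show $B_\alpha(X)=B_\alpha(X)_w$ and invoke Theorem~\ref{t:JNP-c0-barreled}---is exactly the paper's. The paper, however, disposes of the $c_0$-barrelledness hypothesis in one stroke by citing \cite{BG-Baire-LCS}, where the authors prove that $B_\alpha(X)$ is in fact barrelled; you instead attempt to establish $c_0$-barrelledness from scratch via the extension lemma ``every $g\in\IF^S$ lifts to some $\tilde g\in B_\alpha(X)$ for countable $S$''. That reduction is correct, and the extension lemma is indeed true, but proving it is essentially the nontrivial content of the cited barrelledness result, and your sketch does not deliver it.

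Concretely, the functions $f_k=\sum_{n\le k}g(s_n)\phi_{n,k}$ converge to $g$ on $S$, but nothing in your construction forces pointwise convergence on $X\setminus S$; the phrase ``arranging the supports of the $\phi_{n,k}$'s to shrink appropriately'' presupposes a cozero neighbourhood base at each $s_n$ that can be made small, which fails already in $\beta\w$, where no point of $\beta\w\setminus\w$ is a $G_\delta$. For $\alpha\ge 2$ one can, as you note, push the construction up one Baire class and succeed, but for $\alpha=1$ a genuinely different argument is required. So the plan is sound in outline, yet the only hard step---the extension lemma for $\alpha=1$---is precisely what is outsourced to \cite{BG-Baire-LCS} in the paper and is left unproved in your proposal.
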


\begin{proof}
In  \cite{BG-Baire-LCS}, we proved that the space $B_\alpha(X)$ is barrelled. Since $B_\alpha(X)$ carries its weak topology, Theorem \ref{t:JNP-c0-barreled} applies.
\end{proof}

A locally convex space $E$ is {\em quasibarrelled} if and only if   the canonical map $\psi_E :E\to E''$ is a topological embedding.
The next assertion complements dually Proposition \ref{p:bar-weak-JNP}.

\begin{proposition} \label{p:JNP-dual}
Let $(E,\tau)$ be a quasibarrelled space, $\TTT$ be a locally convex topology on $E'$ compatible with the duality $(E,E')$,  and let $H:=(E',\TTT)$. Then:
\begin{enumerate}
\item[{\rm (i)}] $\beta^\ast(H',H)$ coincides with the topology $\tau$ on $E=H'$;
\item[{\rm (ii)}] the space $H$ has the JNP if and only if $E$ does not have the Schur property.
\end{enumerate}
\end{proposition}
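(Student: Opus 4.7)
The plan is to prove (i) by computing $\beta^{*}(H',H)$ explicitly on $E=H'$, and then to deduce (ii) as an immediate corollary. First, since $\TTT$ is compatible with $(E,E')$ we have $H'=E$, and I would observe that the absolutely convex $\TTT$-closed subsets of $E'$ coincide with the absolutely convex $\sigma(E',E)$-closed subsets, since closed absolutely convex sets depend only on the dual pair (Hahn--Banach separation). By the bipolar theorem, every such set $U$ equals the bipolar $(U^{\circ})^{\circ}$ of its polar $U^{\circ}\subseteq E$, and a direct check shows that $U$ is absorbent in $E'$ iff $\sup_{x\in U^{\circ}}|\chi(x)|<\infty$ for every $\chi\in E'$, i.e.\ iff $U^{\circ}$ is $\sigma(E,E')$-bounded, hence bounded in $E$ by Mackey's theorem. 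Conversely, for every bounded $A\subseteq E$ the polar $A^{\circ}$ is an absolutely convex $\sigma(E',E)$-closed absorbent subset of $E'$. Thus the barrels of $H$ are precisely the polars $A^{\circ}$ of bounded subsets $A\subseteq E$.

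From this I would deduce that a set $B\subseteq H$ is barrel-bounded iff $\sup_{\chi\in B}\|\chi\|_{A}<\infty$ for every bounded $A\subseteq E$, i.e.\ iff $B$ is $\beta(E',E)$-bounded. Consequently $\beta^{*}(H',H)$ is the topology on $E$ of uniform convergence on strongly bounded subsets of $E'$, whose basic neighborhoods of zero are the polars $B^{\circ}\subseteq E$ of strongly bounded sets $B\subseteq E'$. But these polars are exactly the $\psi_{E}$-preimages of basic neighborhoods of zero in the strong topology $\beta(E'',E'_\beta)$ of the bidual, so $\beta^{*}(H',H)$ coincides with the pullback of $\beta(E'',E'_\beta)$ along $\psi_{E}:E\to E''$. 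Because $E$ is quasibarrelled, $\psi_{E}$ is a topological embedding, so this pullback equals $\tau$; this proves (i).

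For (ii), I would note that $H'_{w^{*}}=(E,\sigma(E,E'))=E_{w}$, while $H'_{\beta^{*}}=(E,\tau)=E$ by (i). Hence the identity map $H'_{w^{*}}\to H'_{\beta^{*}}$ coincides with the identity $E_{w}\to E$, which fails to be sequentially continuous precisely when $E$ does not have the Schur property. The one real obstacle is the identification of the barrels of $H$ with the polars of bounded subsets of $E$; once this is established, both assertions follow essentially formally from the definitions of quasibarrelledness and of the Schur property.
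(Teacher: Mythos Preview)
Your proof is correct and follows essentially the same route as the paper's: identify the barrels of $H$ with polars $A^{\circ}$ of bounded sets $A\subseteq E$, conclude that barrel-bounded subsets of $H$ are exactly the $\beta(E',E)$-bounded sets, and then use quasibarrelledness of $E$ to recognize $\beta^{*}(H',H)$ as the subspace topology inherited from $E''$, i.e.\ $\tau$. The only difference is cosmetic: you spell out the barrel characterization via Hahn--Banach, the bipolar theorem, and Mackey's theorem, whereas the paper invokes it more tersely; part~(ii) is identical in both.
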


\begin{proof}
(i) Since $H'=E$ and $\sigma(H',H)=\sigma(E,E')$, a subset $S$ of $H'=E$ is $\sigma(H',H)$-bounded if and only if  $S$ is bounded in $E$. Therefore a subset $U$ of $H$ is a barrel if and only if $U=S^\circ$ for some bounded subset $S\subseteq H'$. Hence a subset $B$ of $H$ is barrel-bounded if and only if  $B$ is a $\beta(E',E)$-bounded subset of $E'=H$. Therefore the topology $\beta^\ast(H',H)$ on $H'=E$ is the topology induced on $E$ from $E''$. Since $E$ is quasibarrelled, $E$ is a subspace of $E''$. Thus the topology $\beta^\ast(H',H)$ on $H'=E$ coincides with the original topology $\tau$ of the space $E$.
\smallskip

(ii) The JNP of $H$ means that the identity map
\[
\big( H',\sigma(H',H)\big)=\big( E,\sigma(E,E')\big) \to \big( H',\beta^\ast(H',H)\big)\stackrel{{\rm(i)}}{=} (E,\tau)
\]
is not sequentially continuous that, by definition, means that $E$ does not have the Schur property.
\end{proof}

Proposition \ref{p:JNP-dual} can be applied to one of the most important classes of locally convex spaces, namely, the class of free locally convex spaces introduced by Markov in \cite{Mar}. Recall that the {\em  free locally convex space}  $L(X)$ on a Tychonoff space $X$ is a pair consisting of a locally convex space $L(X)$ and  a continuous map $i: X\to L(X)$  such that every  continuous map $f$ from $X$ to a  locally convex space  $E$ gives rise to a unique continuous operator ${\bar f}: L(X) \to E$ such that $f={\bar f} \circ i$. The free locally convex space $L(X)$ always exists and is essentially unique.
We recall also that the set $X$ forms a Hamel basis of $L(X)$  and  the map $i$ is a topological embedding.
Various locally convex properties of free locally convex spaces are studied in \cite{Gab-Respected,Gabr-free-lcs}.

From the definition of $L(X)$ it easily follows the well known fact that the dual space $L(X)'$ of $L(X)$ is linearly isomorphic to the space $C(X)$. Indeed, the uniqueness of the operator $\bar f$ in the definition of the free locally convex space $(L(X),i)$ ensures that the operator $i':L(X)'\to C(X)$, $i':\mu\mapsto\mu\circ i$, is bijective and hence is a required linear isomorphism.  Via the pairing $(L(X)',L(X))=(C(X),L(X))$ we note that $C_p(X)'_{w^\ast} = L(X)_w$. Usually the space $L(X)_w$ is denoted by $L_p(X)$.

\begin{proposition} \label{p:L(X)-JNP}
For every Tychonoff space $X$, the space $L(X)$ does not have the JNP.
\end{proposition}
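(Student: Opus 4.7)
The plan is to apply Proposition \ref{p:JNP-dual} with $E:=C_p(X)$ and $\mathcal{T}$ the topology of $L(X)$ as a free locally convex space; then $H:=(E',\mathcal{T})=L(X)$ in the notation of that proposition. The identification $C_p(X)'=L(X)$ is already recorded in the excerpt, and the universal property of $L(X)$ gives $(L(X),\mathcal{T})'=C(X)=E$, so $\mathcal{T}$ is compatible with the duality $(E,E')$.

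The only real work is to verify that $E=C_p(X)$ is quasibarrelled, i.e.\ that every $\beta(L(X),C_p(X))$-bounded subset $M$ of $L(X)$ is equicontinuous in $C_p(X)'$. Computing polars of basic $C_p$-neighbourhoods of zero shows that equicontinuity of $M$ is equivalent to $\bigcup_{\mu\in M}\supp(\mu)$ being a finite set together with uniformly bounded coefficients. Arguing by contradiction, suppose this union is infinite; extract a discrete sequence $(x_n)$ and pick $\mu_n\in M$ whose coefficient $c_n$ at $x_n$ is nonzero. Using Tychonoff separation one chooses a point-finite family of open neighbourhoods $U_n\ni x_n$, each disjoint from the finite set $\supp(\mu_n)\setminus\{x_n\}$, together with continuous functions $f_n\in C(X)$ supported in $U_n$ and satisfying $f_n(x_n)=n/|c_n|$. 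Point-finiteness of $\{U_n\}$ ensures that $\{f_n\}$ is pointwise bounded on $X$, while $|\mu_n(f_n)|=n\to\infty$ contradicts the $\beta$-boundedness of $M$.

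Once quasibarrelledness is known, the conclusion is immediate from Proposition \ref{p:JNP-dual}(ii): $L(X)$ has the JNP iff $C_p(X)$ lacks the Schur property. But $\sigma(C_p(X),L(X))$ is generated by the functionals in $L(X)$, each of which is a finite linear combination of Dirac evaluations $\delta_x$, so this weak topology coincides with the pointwise topology itself; that is, $E_w=E$. The identity $E_w\to E$ is then literally the identity map, trivially sequentially continuous, so $C_p(X)$ has the Schur property and therefore $L(X)$ does not have the JNP. The chief obstacle in this plan is the point-finite construction of the $f_n$'s in the middle paragraph; once that Tychonoff-separation argument is in hand, everything else is little more than unpacking the definitions of Proposition \ref{p:JNP-dual}.
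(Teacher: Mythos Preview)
Your approach is essentially the same as the paper's: both apply Proposition~\ref{p:JNP-dual}(ii) with $E=C_p(X)$ and $H=L(X)$, using that the free topology on $L(X)$ is compatible with the duality and that $C_p(X)$ trivially has the Schur property since it carries its own weak topology. The only difference is that the paper invokes Corollary~11.7.3 of Jarchow for the quasibarrelledness of $C_p(X)$, whereas you sketch a direct proof; your sketch is correct in outline, though the ``point-finite separation'' step deserves a word more (the standard move is to use regularity to inductively extract from the infinite support a sequence with pairwise disjoint open neighbourhoods, which is stronger than point-finite and is essentially Lemma~11.7.1 of Jarchow), and you should remark that the remaining case---finite combined support---is immediate since $M$ then lies in a finite-dimensional subspace where $\beta$-boundedness forces bounded coefficients.
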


\begin{proof}
By Corollary 11.7.3 of \cite{Jar}, the space $E=C_p(X)$ is quasibarrelled for every Tychonoff space $X$. Since $C_p(X)$ carries its weak topology it is trivially has the Schur property. As we explained above the topology $\TTT$ of $L(X)$ is compatible with the duality $(E,E')$. Now (ii) of Proposition \ref{p:JNP-dual} applies.
\end{proof}

If $E$ is a locally convex space, we denote by $E'_\mu =\big(E', \mu(E',E)\big)$ the dual space $E'$ of $E$ endowed with the Mackey topology $\mu(E',E)$ of the dual pair $(E,E')$. The following statement is dual to Theorem \ref{t:BLV-theorem-JNP}.

\begin{theorem} \label{t:Frechet-dual-JNP}
For a Fr\'{e}chet space $E$, the following assertions are equivalent:
\begin{enumerate}
\item[{\rm (i)}] $E$ does not have the Schur property.
\item[{\rm (ii)}] $\big(E', \mu(E',E)\big)$ has the JNP.
\item[{\rm (iii)}] $\big(E', \sigma(E',E)\big)$ has the JNP. %E'_{\sigma^\ast}:=
\item[{\rm (iv)}] $E$ has a bounded non-precompact sequence which does not have a subsequence equivalent to the unit basis of $\ell_1$.
\end{enumerate}
\end{theorem}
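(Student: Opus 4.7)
My plan is to split the proof into two blocks: the equivalences (i)$\Leftrightarrow$(ii) and (i)$\Leftrightarrow$(iii) come straight out of Proposition~\ref{p:JNP-dual}, whereas (i)$\Leftrightarrow$(iv) relies on Rosenthal's $\ell_1$ theorem together with the Schur property. A Fr\'echet space is barrelled, hence quasibarrelled, and both $\mu(E',E)$ and $\sigma(E',E)$ are locally convex topologies on $E'$ that are compatible with the duality $(E,E')$ (by the Mackey--Arens theorem). Thus applying Proposition~\ref{p:JNP-dual}(ii) with $\TTT=\mu(E',E)$ and with $\TTT=\sigma(E',E)$ translates the JNP of each of these dual spaces into the failure of the Schur property of $E$, yielding the first block of equivalences at once.

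For (i)$\Rightarrow$(iv), I would start from a $\sigma(E,E')$-null sequence $\{x_n\}_{n\in\w}$ in $E$ that does not converge to zero in the topology of $E$; such a sequence exists by the negation of the Schur property. The Banach--Steinhaus theorem forces $\{x_n\}_{n\in\w}$ to be bounded, and by passing to a subsequence I may assume that $p(x_n)\ge \e$ for some continuous seminorm $p$ on $E$ and some $\e>0$. Such a subsequence is bounded and non-precompact, because any cluster point in $E$ would have to coincide with the weak limit $0$, contradicting $p(x_n)\ge\e$. It also admits no subsequence equivalent to the unit basis of $\ell_1$, since the latter is not weakly null (by the Schur property of $\ell_1$), whereas every subsequence of $\{x_n\}_{n\in\w}$ is weakly null.

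For (iv)$\Rightarrow$(i), I would argue by contrapositive: assuming that $E$ has the Schur property, I show that every bounded sequence $\{x_n\}_{n\in\w}$ in $E$ without a subsequence equivalent to the $\ell_1$-basis is precompact. By Rosenthal's $\ell_1$ theorem (in its Fr\'echet version), every subsequence of $\{x_n\}_{n\in\w}$ contains a further weakly Cauchy subsequence $\{y_k\}_{k\in\w}$. If $\{y_k\}_{k\in\w}$ failed to be Cauchy, I could extract subsequences $\{y_{n_k}\}_{k\in\w}$ and $\{y_{m_k}\}_{k\in\w}$ with $p(y_{m_k}-y_{n_k})\ge\e$ for some continuous seminorm $p$ and some $\e>0$; but the difference sequence $\{y_{m_k}-y_{n_k}\}_{k\in\w}$ is $\sigma(E,E')$-null and therefore $E$-null by the Schur property, which is a contradiction. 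Completeness of the Fr\'echet space $E$ then upgrades this Cauchy property to convergence, so every subsequence of $\{x_n\}_{n\in\w}$ has a convergent further subsequence and $\{x_n\}_{n\in\w}$ is indeed precompact.

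The main obstacle I anticipate is the appeal to Rosenthal's $\ell_1$ theorem in the Fr\'echet setting, which is not entirely standard; it would either have to be cited from the literature on Rosenthal-type dichotomies in Fr\'echet spaces, or reduced to the classical Banach case by localizing to the Banach space generated by the closed absolutely convex hull of the bounded sequence and its Minkowski functional. The remaining steps are a routine combination of duality via Proposition~\ref{p:JNP-dual} and an elementary manipulation showing that weakly Cauchy sequences in a Schur Fr\'echet space are norm Cauchy.
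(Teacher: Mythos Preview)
Your treatment of (i)$\Leftrightarrow$(ii)$\Leftrightarrow$(iii) via Proposition~\ref{p:JNP-dual} is exactly what the paper does. The difference lies in (i)$\Leftrightarrow$(iv): the paper does not prove this equivalence at all but simply invokes Theorem~1.2 of \cite{Gabr-free-resp}, which is stated there precisely as the equivalence of (i) and (iv) for Fr\'echet spaces. Your route is therefore more self-contained: you supply a direct argument built on Rosenthal's $\ell_1$ dichotomy, whereas the paper outsources the work to an earlier article of the second author.

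Your direct argument is sound in outline. The implication (i)$\Rightarrow$(iv) is unproblematic. For (iv)$\Rightarrow$(i), the step ``weakly Cauchy $+$ Schur $\Rightarrow$ Cauchy'' is correct as you wrote it, and the passage from ``every subsequence has a convergent further subsequence'' to precompactness is standard in a metrizable space. The only genuine obligation, which you rightly flag, is the Fr\'echet form of Rosenthal's dichotomy. Your suggested reduction works: in a Fr\'echet space the closed absolutely convex hull $B$ of a bounded sequence is a Banach disk (closed, bounded, hence complete), so one may run the classical Rosenthal theorem inside the Banach space $E_B=(\mathrm{span}\,B,\|\cdot\|_B)$. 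One should note that a subsequence equivalent to the $\ell_1$-basis in $E_B$ is also equivalent to it in $E$ (the inclusion $E_B\hookrightarrow E$ is continuous, and any continuous linear bijection from $\ell_1$ onto a closed subspace of a Fr\'echet space is open by the open mapping theorem), so the hypothesis ``no $\ell_1$-subsequence in $E$'' transfers to $E_B$; and a weakly Cauchy sequence in $E_B$ is a fortiori weakly Cauchy in $E$. With that bookkeeping your argument goes through, and it has the advantage of not relying on the external reference~\cite{Gabr-free-resp}.
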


\begin{proof}
The clauses (i)--(iii) are equivalent by Proposition \ref{p:JNP-dual}, and Theorem 1.2 of \cite{Gabr-free-resp} exactly states that (i) and (iv)  are equivalent.
\end{proof}

We finish this section with the following ``hereditary'' result.

\begin{proposition} \label{p:JNP-hereditary}
Let an lcs $E$ have the JNP. Then
\begin{enumerate}
\item[{\rm (i)}] for every lcs $L$, the product $E\times L$ has the JNP;
\item[{\rm (ii)}] closed subspaces and Hausdorff quotients of an lcs with the JNP may fail to have the JNP;
\item[{\rm (iii)}] every lcs $H$ is topologically isomorphic to a closed subspace of an lcs with the JNP.
\end{enumerate}
\end{proposition}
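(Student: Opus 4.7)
The plan is to prove (i) directly, then deduce both (iii) and (ii) from it as essentially formal consequences. The whole proposition is about how the JNP behaves under the basic lcs constructions of products, closed subspaces, and quotients, and the key observation that makes everything work is that the JNP is inherited by products from any one factor.

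For (i), start with a sequence $\{\chi_n\}_{n\in\omega}\subseteq E'$ witnessing the JNP of $E$: after rescaling and passing to a subsequence, $\chi_n\to 0$ in $\sigma(E',E)$ while $\|\chi_n\|_B>1$ for some barrel-bounded $B\subseteq E$. Push it into $(E\times L)'=E'\times L'$ by $\widetilde\chi_n(x,y):=\chi_n(x)$; clearly $\widetilde\chi_n\to 0$ in $\sigma((E\times L)',E\times L)$. The heart of the argument is showing that $B\times\{0\}$ is barrel-bounded in $E\times L$. For this I would take any barrel $U\subseteq E\times L$ and consider the slice $U_0:=\{x\in E:(x,0)\in U\}$: it is closed, absolutely convex, and absorbent in $E$, hence a barrel of $E$, so there exists $n$ with $B\subseteq nU_0$ and therefore $B\times\{0\}\subseteq nU$. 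Then $\|\widetilde\chi_n\|_{B\times\{0\}}=\|\chi_n\|_B>1$, which rules out $\beta^\ast$-convergence and establishes the JNP of $E\times L$.

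For (iii), fix any infinite-dimensional Banach space $F$ (say $F=\ell_2$), which has the JNP by the classical Josefson--Nissenzweig theorem (Theorem~\ref{t:JN-theorem}). By (i) the product $H\times F$ has the JNP, and the map $h\mapsto(h,0)$ is a topological embedding of $H$ onto the closed linear subspace $H\times\{0\}$ of $H\times F$. For (ii), apply this construction to some concrete lcs $H$ that is known to lack the JNP: for example $H=L(X)$ (Proposition~\ref{p:L(X)-JNP}) or $H=B_\alpha(X)$ (Proposition~\ref{c:JNP-Baire}) for a Tychonoff space $X$. Then $H$ sits as a closed subspace of the JNP space $E:=H\times F$ that itself fails the JNP, dealing with the subspace half of (ii). Simultaneously, the coordinate projection $E\to H$ is a continuous open surjection with closed kernel $\{0\}\times F$, so $H$ is topologically isomorphic to the Hausdorff quotient $E/(\{0\}\times F)$, giving the quotient half of (ii) at no extra cost.

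The only substantive step is the barrel-boundedness verification in (i); everything else is a packaging argument that cites earlier results. Even in (i) the obstacle is mild, since a general barrel in a product need not split as a product of barrels, but the one-sided slice trick above suffices because we only need to dominate the set $B\times\{0\}$ (not a full product $B\times B'$), and this is exactly what a barrel in $E\times L$ provides through its intersection with $E\times\{0\}$.
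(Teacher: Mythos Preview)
Your proof is correct and follows essentially the same route as the paper: both deduce (ii) and (iii) from (i) by embedding a space without the JNP as a closed complemented subspace of a product with a Banach space, and the argument for (i) is the same modulo presentation (the paper asserts $(E\times L)'_{\beta^\ast}=E'_{\beta^\ast}\times L'_{\beta^\ast}$ directly, while your slice argument is exactly the verification of the half of this identification that is needed). The only cosmetic difference is the choice of the non-JNP space in (ii): the paper uses a Fr\'echet--Montel space (via Theorem~\ref{t:BLV-theorem-JNP}) rather than $L(X)$ or $B_\alpha(X)$.
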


\begin{proof}
(i) It is easy to check that $(E\times L)'_{\beta^{\ast}}=E'_{\beta^{\ast}}\times L'_{\beta^{\ast}}$. Since also $(E\times L)'_{w^{\ast}} =E'_{w^{\ast}}\times L'_{w^{\ast}}$ the JNP of $E$ implies that the identity map $(E\times L)'_{w^{\ast}}\to (E\times L)'_{\beta^{\ast}}$ is not continuous. Thus  $E\times L$ has the JNP.

(ii) Let $E$ be a Banach space and $L$ be a Fr\'{e}chet--Montel space. Then $L$ is topologically isomorphic to a closed subspace and to a  Hausdorff quotient of $E\times L$, and the assertion follows from (i) and Theorem \ref{t:BLV-theorem-JNP}.

(iii) If $Z$ is a Banach space, then $H$ embeds into $Z\times H$. It remains to note that,  by (i) and the Josefson--Nissenzweig theorem \ref{t:JN-theorem}, $Z\times H$ has the JNP.
\end{proof}
%In Example \ref{exa:JNP-normed} we show that lcs with the JNP may contain a dense subspace without the JNP.

%%%%%%%%%%%%%%%%%%%%%%%%%%%%%%%%%%%%%%%%%%%%%%%%%%%
%%%%%%%%%%%%%%%%%%%%%%%%%%%%%%%%%%%%%%%%%%%%%%%%%%%
%%%%%%%%%%%%%%%%%%%%%%%%%%%%%%%%%%%%%%%%%%%%%%%%%%%
%%%%%%%%%%%%%%%%%%%%%%%%%%%%%%%%%%%%%%%%%%%%%%%%%%%
%%%%%%%%%%%%%%%%%%%%%%%%%%%%%%%%%%%%%%%%%%%%%%%%%%%

\section{The Josefson--Nissenzweig property in function spaces} \label{sec:JNP-function}

%%%%%%%%%%%%%%%%%%%%%%%%%%%%%%%%%%%%%%%%%%%%%%%%%%%
%%%%%%%%%%%%%%%%%%%%%%%%%%%%%%%%%%%%%%%%%%%%%%%%%%%
%%%%%%%%%%%%%%%%%%%%%%%%%%%%%%%%%%%%%%%%%%%%%%%%%%%
%%%%%%%%%%%%%%%%%%%%%%%%%%%%%%%%%%%%%%%%%%%%%%%%%%%
%%%%%%%%%%%%%%%%%%%%%%%%%%%%%%%%%%%%%%%%%%%%%%%%%%%

Let $X$ be a set, and let $f:X\to\IF$ be a function to the field $\IF$ of real or complex numbers. For a subset $A\subseteq X$ and $\e>0$, let
\[
\|f\|_A:=\sup(\{|f(x)|:x\in A\}\cup\{0\})\in [0,\infty].
\]
Observe that a subset $A\subseteq X$ is  {\em functionally bounded} iff $\|f\|_A<\infty$ for any continuous function $f:X\to\IR$.  A Tychonoff space $X$ is {\em pseudocompact} if $X$ is functionally bounded in $X$.

For a subfamily $\FF\subseteq\ff^X$, we put
\[
[A;\e]_{\FF}:=\{f\in \FF:\|f\|_A\le\e\}.
\]
If the family $\FF$ is clear from the context, then we shall omit the subscript $\FF$ and write $[A;\e]$ instead of $[A;\e]_{\FF}$.
A family $\mathcal S$ of subsets of  $X$ is {\em directed\/} if for any sets $A,B\in\mathcal S$ the union $A\cup B$ is contained in some set $C\in\mathcal S$.

For a Tychonoff space $X$, we denote by $C(X)$ the space of all continuous functions $f:X\to \IF$ on $X$ and let $C^b(X)$ be the subspace of $C(X)$ consisting of all bounded functions.

A Tychonoff space $X$ is defined to be a {\em $\mu$-space} if every functionally bounded subset of $X$ has compact closure in $X$. We denote by $\upsilon X$, $\mu X$ and $\beta X$ the Hewitt completion (=realcompactification), the Diedonn\'e completion and  the Stone-\v Cech compactification of $X$, respectively. It is known (\cite[8.5.8]{Eng}) that $X\subseteq \mu X\subseteq \upsilon X \subseteq \beta X$. Also it is known that all paracompact spaces and all realcompact spaces are Diedonn\'e complete and each Diedonn\'e complete space is a $\mu$-space, see \cite[8.5.13]{Eng}. On the other hand, each pseudocompact $\mu$-space is compact. By a {\em compactification} of a Tychonoff space $X$ we understand any compact Hausdorff space $\gamma X$ containing $X$ as a dense subspace.

For a Tychonoff space $X$, the space $C(X)$ carries many important {\em locally convex topologies}, i.e., topologies turning $C(X)$ into a locally convex space. For a locally convex topology $\TTT$ on $C(X)$, we denote by $C_\TTT(X)$ the space $C(X)$ endowed with the topology $\TTT$. The subspace $C^b(X)$ of $C_\TTT(X)$ with the induced topology is denoted by $C^b_\TTT(X)$.

Each directed family $\mathcal S$ of functionally bounded sets in a Tychonoff space $X$ induces a locally convex topology $\TTT_{\mathcal{S}}$ on $C(X)$ whose neighborhood base at zero consists of the sets $[S;\e]$ where $S\in\mathcal{S}$ and $\e>0$. The topology $\TTT_{\mathcal{S}}$ is called {\em the topology of uniform convergence on sets of the family $\mathcal{S}$}. The topology $\TTT_{\mathcal{S}}$ is Hausdorff if and only if the union $\bigcup\mathcal{S}$ is dense in $X$.

If $\mathcal{S}$
%=\FF(X), \KK(X)$ or $\fB(X)$
is the family of all finite, compact or functionally bounded subsets of $X$, respectively, then the topology $\TTT_{\mathcal{S}}$ will be denoted by $\TTT_p$, $\TTT_k$ or  $\TTT_b$, and the function space $C_{\TTT_{\mathcal{S}}}(X)$ will be denoted by $C_p(X)$,  $\CC(X)$ or $C_b(X)$, respectively.
%It is easy to show (see for example Lemma 3.1 in \cite{BG-sGP}) that $X$ is a $\mu$-space if and only if $\CC(X)=C_b(X)$.
%%We shall use the next simple lemma.

%\begin{lemma} \label{l:Ck=Cb}
%A Tychonoff space $X$ is a $\mu$-space if and only if $\CC(X)=C_b(X)$.
%\end{lemma}

%\begin{proof}
%If $X$ is a $\mu$-space, then each closed functionally bounded set in $X$ is compact, which implies the equality $C_b(X)=\CC(X)$. Now assume conversely that $C_b(X)=\CC(X)$. Given any closed functionally bounded set $B\subseteq X$, consider the neighborhood of zero $[B;1]$ in $C_b(X)$. Since $C_b(X)=\CC(X)$, there exists a compact set $K\subseteq X$ and $\e>0$ such that $[K;\e]\subseteq[B;1]$. We claim that $B\subseteq K$. In the opposite case, by the Tychonoff property of $X$, we could find a continuous function $f:X\to\IR$ such that $f(K)\subseteq\{0\}$ and $f(x)>1$ for some point $x\in B\setminus K$. Then $f\in [K;\e]\subseteq [B;1]$ and hence $|f(x)|\le 1$, which contradicts the choice of $f$. This contradiction shows that $B\subseteq K$ and hence  $B$ is compact.
%\end{proof}

Although the topologies $\TTT_p$, $\TTT_k$ and $\TTT_b$ are the most famous and well-studied there are other natural and important topologies on $C(X)$, for example, the topology $\TTT_s$ defined by the family %$\mathcal{S}(X)$
of all finite unions of convergent sequences in $X$ and the topology $\TTT_c$ on $C(X)$ defined by the family %$\mathcal{CB}(X)$
of all {\em countable} functionally-bounded subsets of $X$. It is clear that
\[
\TTT_p\subseteq\TTT_s\subseteq\TTT_c\subseteq\TTT_b \;\;\mbox{ and } \;\; \TTT_p\subseteq\TTT_s\subseteq\TTT_k\subseteq\TTT_b.
\]
%\noindent Below we give numerous new locally convex vector topologies on the space $C(X)$ which are between $\TTT_p$ and $\TTT_b$.

% Any compactification $\gamma X$ of $X$ determines the family $\fB_\gamma$ of all subsets $B\subseteq X$ such that for every function $f\in C(X)$ there exists a function $g\in C(\gamma X)$ such that $g{\restriction}_B=f{\restriction}_B$. It is clear that every set $B\in\fB_\gamma$ is functionally bounded in $X$ and hence $\fB_\gamma \subseteq \fB(X)$. The Tietze-Urysohn Theorem \cite[3.11]{GiJ} ensures that the family $\fB_\gamma$ contains all compact subsets of $X$. For the Stone-\v{C}ech compactification $\beta X$ of $X$ the family $\fB_\beta$ coincides with the family $\fB(X)$ of all functionally bounded sets in $X$. We shall denote by $\TTT_\gamma$ the locally convex vector topology on $C(X)$ defined by the family $\fB_\gamma$  and set $C_{\TTT_\gamma}(X):=(C(X),\TTT_\gamma)$. It is clear that $\TTT_k \leq \TTT_\gamma \leq \TTT_b$.

%We shall say that a directed collection $\mathcal{S}$ of functionally bounded subsets of $X$ is {\em saturated} if it is closed under taking closures, that is, for every $S\in \mathcal{S}$ the closure $\overline{S}$ of $S$ belongs to $\mathcal{S}$. Clearly, the families $\FF(X), \KK(X), \mathcal{S}(X)$ and $\fB(X)$ are saturated. However, the family $\mathcal{CB}(X)$ is not saturated, its saturation is the family $\mathcal{SB}(X)$  of all separable functionally bounded sets. Observe that in general the topologies $\TTT_c$ and $\TTT_{\mathcal{SB}(X)}$ are different.

In \cite{BG-sGP}  we consider the following locally convex topologies on function spaces.
Let $X$ be a dense subspace of a Tychonoff space $M$ (for example, $M=\mu X$, $\upsilon X$ or $\beta X$). Then the union $\bigcup\mathcal{S}$ of the directed  family $\mathcal S$ of all  finite (resp. compact, functionally bounded) subsets of $X$ is dense in $M$. Therefore $\mathcal{S}$   defines the Hausdorff locally convex vector topology $\TTT_{\mathcal S}$} on the space $C(M)$ denoted by $\TTT_{p{\restriction}X}$ (resp. $\TTT_{k{\restriction}X}$, $\TTT_{b{\restriction}X}$). %For simplicity of notations we shall denote the function spaces $C_{\TTT_{p{\restriction}X}}(M)$, $C_{\TTT_{k{\restriction}X}}(M)$ and $C_{\TTT_{b{\restriction}X}}(M)$ by $C_{p{\restriction}X}(M)$, $C_{k{\restriction}X}(M)$ and $C_{b{\restriction}X}(M)$,  respectively.

Numerous results concerning the JNP obtained below show that if a function space $C_\TTT(X)$ has the JNP,  then so do the spaces $C_\tau(X)$, $C_\TTT(\mu X)$, $C^b_\TTT(X)$, $C_\tau^b(X)$, $C_\TTT^b(\mu X)$, and $C(\beta X)$, where $\tau$ is a locally convex topology on $C(X)$, stronger than $\TTT$. These facts and the aforementioned discussion motivate the following definition which is very useful to unify all proofs.

\begin{definition} \label{def:between}
Let $I:E\to L$ be an injective continuous  operator between locally convex spaces $E$ and $L$. We shall say that a locally convex space $H$ is {\em between} the spaces $E$ and $L$ if there exist injective continuous operators $T_E:E\to H$ and $T_L:H\to L$ such that $T_L\circ T_E=I$.\qed
\end{definition}
For example, for any Tychonoff space $X$ the function space $\CC(X)$ is between $C_{b}(X)$ and $C_p(X)$.  Also for any compactification $\gamma X$ of $X$, the space $C_{k}^b(X)$ is between the spaces $C(\gamma X)$ and $\CC(X)$ linked by the restriction operator $I:C(\gamma X)\to \CC(X)$, $I:f\mapsto f{\restriction}_X$.

The {\em support} $\supp(\mu)$ of a linear functional $\mu:C(X)\to\IF$ is the set of all points $x\in X$ such that for every neighborhood $O_x\subseteq X$ of $x$  there exists a function $f\in C(X)$ such that $\mu(f)\ne 0$ and $\supp(f)\subseteq O_x$ where $\supp(f)=\overline{\{x\in X:f(x)\ne 0\}}$. The definition of $\supp(\mu)$ implies that it is a closed subset of $X$. We shall use the following assertions.

\begin{lemma}\label{l:3.1}%[\protect{\cite[Lemma~3.1]{BG-sGP}}]  \label{l:3.1}
Let $X$ be a Tychonoff space, and let $\mathcal{S}$ be a directed family of functionally bounded sets in $X$. If a functional $\mu\in C(X)'$ is continuous in the topology $\TTT_{\mathcal S}$, then $\supp(\mu)\subseteq\overline{S}$ for some set $S\in\mathcal S$ such that $[S;0]\subseteq\mu^{-1}(0)$.
\end{lemma}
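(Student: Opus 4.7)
The plan is to extract from the assumed $\TTT_{\mathcal S}$-continuity of $\mu$ a single quantitative estimate, and then deduce both conclusions from it by purely formal linear-algebraic and topological manipulations.

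First I would invoke continuity of $\mu$ at the origin of $C_{\TTT_{\mathcal S}}(X)$. Since the sets $[S;\e]$ with $S\in\mathcal{S}$ and $\e>0$ form a neighborhood base at zero for $\TTT_{\mathcal S}$, there exist $S\in\mathcal S$ and $\e>0$ such that $|\mu(f)|\le 1$ for every $f\in[S;\e]$. This is the only place where continuity is used, and the rest of the argument is soft.

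Next, I would check that $[S;0]\subseteq\mu^{-1}(0)$ by a homogeneity argument. If $f\in C(X)$ satisfies $\|f\|_S=0$, then for every scalar $t\in\IF$ one has $\|tf\|_S=0\le\e$, so $tf\in[S;\e]$ and therefore $|t|\cdot|\mu(f)|=|\mu(tf)|\le 1$. Letting $|t|\to\infty$ forces $\mu(f)=0$, which gives the second conclusion.

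Finally, I would verify that $\supp(\mu)\subseteq\overline{S}$ by contraposition. Fix any $x\in X\setminus\overline{S}$; by Tychonoff separation there is an open neighborhood $O_x\subseteq X$ of $x$ with $O_x\cap S=\emptyset$. If $f\in C(X)$ has $\supp(f)\subseteq O_x$, then $f$ vanishes on $S$, so $f\in[S;0]$, and hence $\mu(f)=0$ by the previous step. This exactly says that $x\notin\supp(\mu)$, so $\supp(\mu)\subseteq\overline{S}$.

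I do not expect a serious obstacle: the statement is essentially a book-keeping lemma that unpacks the definitions of $\TTT_{\mathcal S}$-continuity and of $\supp(\mu)$. The only subtle point is remembering that the same set $S$ must serve both assertions, which is automatic because the $S$ producing the continuity estimate is the one whose closure contains the support.
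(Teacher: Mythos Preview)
Your proof is correct and follows essentially the same approach as the paper's: both extract a single $S\in\mathcal S$ from continuity at the origin, deduce $[S;0]\subseteq\mu^{-1}(0)$ by a scaling/homogeneity argument (the paper phrases it as $[S;0]=\bigcap_n[S;\e/n]$, you as $tf\in[S;\e]$ for all $t$), and then obtain $\supp(\mu)\subseteq\overline{S}$ by the same contrapositive. The differences are purely cosmetic.
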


\begin{proof}
By the continuity of $\mu$ in the topology $\TTT_{\mathcal S}$, there exist a set $S\subseteq \mathcal S$ and $\e>0$  such that $\mu([S;\e])\subseteq(-1,1)$. Then
\[
[S;0]=\bigcap_{n\in\IN}[S;\tfrac\e{n}]\subseteq\bigcap_{n\in\w}\mu^{-1}\big((-\tfrac1n,\tfrac1n)\big)=\mu^{-1}(0).
\]
It remains to prove that $\supp(\mu)\subseteq\overline S$. In the opposite case we can find a function $f\in C(X)$ such that $\mu(f)\ne 0$ and $\supp(f)\cap \overline S=\emptyset$. On the other hand, $f\in [S;0]\subseteq \mu^{-1}(0)$ and hence $\mu(f)=0$. This contradiction shows that $\supp(\mu)\subseteq\overline S$.
\end{proof}

\begin{lemma} \label{l:3.2}%[\protect{\cite[Lemma~3.2]{BG-sGP}}] \label{l:3.2}
Let $X$ be a Tychonoff space. If a linear functional $\mu\in C(X)'$ is continuous in the topology $\TTT_{k}$, then $\supp(\mu)$ is a compact subset of $X$ and $[\supp(\mu);0]\subseteq\mu^{-1}(0)$.
\end{lemma}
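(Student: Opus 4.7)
First, apply Lemma~\ref{l:3.1} with the directed family $\mathcal S$ of all compact subsets of $X$, so that $\TTT_{\mathcal S}=\TTT_{k}$. Since $\mu$ is $\TTT_{k}$-continuous, Lemma~\ref{l:3.1} yields a compact set $K\subseteq X$ with $\supp(\mu)\subseteq\overline K=K$ and $[K;0]\subseteq\mu^{-1}(0)$. As a closed subset of the compact set $K$, the set $\supp(\mu)$ is itself compact, establishing the first assertion.

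For the inclusion $[\supp(\mu);0]\subseteq\mu^{-1}(0)$, the plan is to realize $\mu$ as a Radon measure on $K$ and match its Radon support to $\supp(\mu)$. Every compact subset of a Tychonoff space is $C^\ast$-embedded (being closed in $\beta X$), so the restriction map $R\colon C(X)\to C(K)$, $f\mapsto f|_K$, is surjective with kernel $[K;0]$. Since $[K;0]\subseteq\mu^{-1}(0)$ and the $\TTT_{k}$-continuity of $\mu$ supplies an estimate $|\mu(f)|\le c\,\|f\|_K$ for some constant $c>0$, the functional $\mu$ descends uniquely to a sup-norm continuous linear functional $\tilde\mu\colon C(K)\to\IF$ with $\mu=\tilde\mu\circ R$. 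The Riesz representation theorem on the compact Hausdorff space $K$ then produces a finite regular signed Borel measure $\nu$ on $K$ with $\tilde\mu(h)=\int_K h\,d\nu$.

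The crucial step is to show $\supp(\nu)\subseteq\supp(\mu)$, where $\supp(\nu)\subseteq K$ is the closed support of the Radon measure $\nu$. Fix $x\in K\setminus\supp(\mu)$ and pick, by the definition of $\supp(\mu)$, an open neighbourhood $O_x\subseteq X$ of $x$ such that $\mu(g)=0$ for every $g\in C(X)$ with $\supp(g)\subseteq O_x$. For any $h\in C(K)$ with $\supp_K(h)\subseteq O_x\cap K$, extend $h$ to some $\tilde h\in C(X)$ using the $C^\ast$-embedding, and, by the regularity of $X$, choose an open set $V$ with $\supp_K(h)\subseteq V$ and $\overline V\subseteq O_x$. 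Then select a cutoff $\psi\in C(X)$ with $0\le\psi\le 1$, $\psi\equiv 1$ on the compact set $\supp_K(h)$, and $\psi\equiv 0$ off $V$; such a $\psi$ exists because in a Tychonoff space any compact set and disjoint closed set can be separated by a continuous function. The product $g:=\tilde h\,\psi$ satisfies $\supp(g)\subseteq\overline V\subseteq O_x$ and $g|_K=h$, hence $\int_K h\,d\nu=\tilde\mu(h)=\mu(g)=0$. Since this holds for all such test functions $h$, Riesz uniqueness applied on the locally compact Hausdorff space $O_x\cap K$ forces $|\nu|(O_x\cap K)=0$, and consequently $x\notin\supp(\nu)$.

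Finally, if $f\in C(X)$ satisfies $f|_{\supp(\mu)}=0$, then $f|_K$ vanishes on $\supp(\nu)\subseteq\supp(\mu)$, so $\mu(f)=\tilde\mu(f|_K)=\int_K f|_K\,d\nu=0$, proving the desired inclusion. The main obstacle is the cutoff construction in the third paragraph: simultaneously arranging $g|_K=h$ and $\supp(g)\subseteq O_x$ requires a careful combination of the $C^\ast$-embeddability of compact subsets with a Urysohn-type cutoff, both of which rely essentially on the Tychonoff/regularity hypotheses on $X$.
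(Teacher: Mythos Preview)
Your proof is correct, but it follows a genuinely different strategy from the paper's.

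The paper argues directly, without measure theory: assuming $f{\restriction}_{\supp(\mu)}=0$ but $\mu(f)\ne 0$, it extends $f{\restriction}_K$ to $\bar f\in C(\beta X)$, covers $K$ by the set $U=\{|\bar f|<\e\}$ together with the neighborhoods $O_x$ coming from the definition of support, and then uses a finite partition of unity on $\beta X$ subordinate to this cover to split $\bar f$ into pieces whose $\mu$-values are either zero (pieces supported in some $O_x$ or off $K$) or small (the piece supported in $U$), contradicting $\mu(f)=2$. By contrast, you factor $\mu$ through $C(K)$, invoke the Riesz representation theorem to obtain a Radon measure $\nu$ on $K$, and then show $\supp(\nu)\subseteq\supp(\mu)$ via a cutoff construction, so that any $f$ vanishing on $\supp(\mu)$ integrates to zero. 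Your route is more conceptual---it explains \emph{why} the result is true by identifying $\supp(\mu)$ with a genuine measure support---at the cost of importing the Riesz theorem and the uniqueness argument for measures on locally compact spaces. The paper's route is more self-contained, using only Tietze--Urysohn and a partition of unity, which keeps the lemma independent of measure-theoretic machinery.
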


\begin{proof}
By the continuity of $\mu$ in the topology $\TTT_{k}$, there exist a compact subset $K\subseteq {X}$ and $\e>0$ such that $\mu([K;\e])\subseteq(-1,1)$. By (the proof of) Lemma~\ref{l:3.1}, $\supp(\mu)\subseteq K$ and $[K;0]\subseteq\mu^{-1}(0)$. Since $\supp(\mu)$ is a closed subset of $X$, $\supp(\mu)$ is closed in $K$ and hence $\supp(\mu)$ is a compact subset of $X$.

It remains to prove that $[\supp(\mu);0]\subseteq\mu^{-1}(0)$. To derive a contradiction, assume that $[\supp(\mu);0]\not\subseteq\mu^{-1}(0)$ and hence there exists a continuous function $f\in C(X)$ such that $\mu(f)\ne 0$ but $f{\restriction}_{\supp(\mu)}=0$. Multiplying $f$ by a suitable constant, we can assume that $\mu(f)=2$. Embed the space $X$ into its Stone--\v{C}ech compactification $\beta X$. By the Tietze--Urysohn Theorem, there exists a continuous function $\bar f\in C(\beta X)$ such that $\bar f{\restriction}_K=f{\restriction}_K$. It follows from $[K;0]\subseteq\mu^{-1}(0)$ that $\mu(f)=\mu(\bar f{\restriction}_X)$.

Consider the open neighborhood $U=\{x\in \beta X:|\bar f(x)|<\e\}$ of $\supp(\mu)$ in $\beta X$. By the definition of support $\supp(\mu)$, every point $x\in K\setminus U$ has an open neighborhood $O_x\subseteq \beta X$ such that $\mu(g)=0$ for any function $g\in C(X)$ with $\supp(g)\subseteq O_x\cap X$. Observe that $U\cup\bigcup_{x\in K\setminus U}O_x$ is an open neighborhood of the compact set $K$ in $\beta X$.
So there is a finite family $F\subseteq K\SM U$ such that $K\subseteq U\cup\bigcup_{x\in F} O_x$. Let $1_{\beta X}$ denote the constant function $\beta X\to\{1\}$. By the paracompactness of the compact space $\beta X$, there is a finite family $\{\lambda_0,\dots,\lambda_n\}$ of continuous functions $\lambda_i:\beta X \to[0,1]$ such that $\sum_{i=0}^n\lambda_i=1_{\beta X}$ and for every $i\in\{0,\dots,n\}$, the support $\supp(\lambda_i)$ is contained in some set $V\in\{\beta X\setminus K,U\}\cup\{O_x:x\in F\}$. We lose no generality assuming that
\[
\bigcup_{i=0}^j \supp(\lambda_i)\subseteq \beta X\setminus K, \quad \bigcup_{i=j+1}^s\supp(\lambda_i)\subseteq U,
\]
and for every $i\in\{s+1,\dots,n\}$ there exists $x_i\in F$ such that $\supp(\lambda_i)\subseteq O_{x_i}$.

Replacing the functions, $\lambda_0,\dots,\lambda_j$ by the single function $\sum_{i=0}^j\lambda_i$ and the functions $\lambda_{j+1},\dots,\lambda_s$ by the single function $\sum_{j+1}^s\lambda_i$, we can assume that $j=0$ and $s=1$. In this simplified case we have $\supp(\lambda_0)\subseteq \beta X\setminus K$, $\supp(\lambda_1)\subseteq U$ and $\supp(\lambda_i)\subseteq O_{x_i}$ for all $i\in\{2,\dots,n\}$.

For every $i\in n$, consider the function $f_i\in C(\beta X)$, defined by  $f_i(x):=\lambda_i(x)\cdot \bar f(x)$ for $x\in\beta X$. Then $\bar f=\sum_{i\in n} f_i$.

It follows from $\supp(f_0)\subseteq\supp(\lambda_0)\subseteq\beta X\setminus K$ and $[K;0]\subseteq \mu^{-1}(0)$ that  $f_0{\restriction}_K =0$ and $\mu(f_0{\restriction}_X)=0$.

Since $K\subseteq U$, $\bar f(U)\subseteq(-\e,\e)$ and $\supp(f_1)\subseteq \supp(\lambda_1)\subseteq U$, the function $f_1{\restriction}_X= ({\bar f}{\cdot}\lambda_1){\restriction}_X$ belongs to the set $[K;\e]\subseteq\mu^{-1}\big((-1,1)\big)$ and hence
\begin{equation}\label{eq:fle1}
\big|\mu\big(f_1{\restriction}_X\big)\big|\le 1.
\end{equation}

For every $i\in\{2,\dots,n\}$, we have $\supp(f_i)\subseteq\supp(\lambda_i)\subseteq O_{x_i}$ and hence $\mu(f_i{\restriction}_X)=0$ by the choice of $O_{x_i}$.
\smallskip

Now we see that
\[
2=\mu(f)=\mu(\bar f{\restriction}_X)=\mu(f_0{\restriction}_X)+\mu(f_1{\restriction}_X) +\mu\Big(\sum_{i=2}^n f_i{\restriction}_X\Big)=\mu(f_1{\restriction}_X),
\]
which contradicts (\ref{eq:fle1}).
\end{proof}

\begin{lemma}  \label{l:3.3}%[\protect{\cite[Lemma~3.3]{BG-sGP}}]   \label{l:3.3}
Let $X$ be a Tychonoff space. %, and let $\mathcal S$ be a directed family of functionally bounded sets in $X$.
For any bounded subset $\mathcal M\subseteq (C_{b}(X))'_{w^\ast}$ the set $\supp(\M)=\bigcup_{\mu\in\mathcal M}\supp(\mu)$ is functionally bounded in $X$.
\end{lemma}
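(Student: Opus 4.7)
The plan is to argue by contradiction. Assume $\supp(\M)$ is not functionally bounded, fix $g\in C(X)$ with $\|g\|_{\supp(\M)}=\infty$, and construct a single $h\in C(X)$ with $\sup_{\mu\in\M}|\mu(h)|=\infty$, contradicting the weak$^*$-boundedness of $\M$. After replacing $g$ by $|g|$, choose $x_n\in\supp(\M)$ with $g(x_n)\to\infty$ and $\mu_n\in\M$ with $x_n\in\supp(\mu_n)$. Lemma~\ref{l:3.1} applied to the directed family of functionally bounded subsets of $X$ (which defines $\TTT_b$) yields, for each $n$, a functionally bounded set $S_n\subseteq X$ with $\supp(\mu_n)\subseteq\overline{S_n}$ and $[S_n;0]\subseteq\mu_n^{-1}(0)$; set $K_n:=\|g\|_{S_n}<\infty$. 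Since $x_n\in\overline{S_n}$ gives $g(x_n)\le K_n$, we may thin the sequence so that the growth condition $g(x_{n+1})>K_n+2$ holds for every $n$.

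Set $U_n:=g^{-1}\bigl((g(x_n)-1,g(x_n)+1)\bigr)$, an open neighborhood of $x_n$. The growth condition makes the $U_n$'s pairwise disjoint and, crucially, for all $m>n$ produces the one-sided separation $U_m\cap\overline{S_n}=\emptyset$: indeed $g(U_m)\subseteq(K_n+1,\infty)$ while $g(\overline{S_n})\subseteq[-K_n,K_n]$. Using $x_n\in\supp(\mu_n)$, pick $\tilde h_n\in C(X)$ supported in $U_n$ with $a_n:=\mu_n(\tilde h_n)\ne 0$, and define $h_n:=\lambda_n\tilde h_n$ inductively by $\lambda_n:=\bigl(n-\sum_{j<n}\mu_n(h_j)\bigr)/a_n$, so that $\mu_n(h_1+\cdots+h_n)=n$. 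Let $h:=\sum_n h_n$; since $g(x_n)\to\infty$, each $x\in X$ has the neighborhood $g^{-1}((g(x)-1,g(x)+1))$ meeting only finitely many $U_n$, so $\{\supp(h_n)\}$ is locally finite and $h\in C(X)$.

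To conclude, observe that for $j>n$ the inclusions $\supp(h_j)\subseteq U_j$ and $U_j\cap\overline{S_n}=\emptyset$ force $h_j\restriction_{S_n}=0$, hence $h_j\in[S_n;0]\subseteq\mu_n^{-1}(0)$. The resulting ``upper-triangular'' vanishing gives $\mu_n(h)=\sum_{j\le n}\mu_n(h_j)=n$ for every $n$, so $\sup_{\mu\in\M}|\mu(h)|\ge\sup_n|\mu_n(h)|=\infty$, the desired contradiction. The main obstacle is that the $\tilde h_n$'s need not be annihilated by the other $\mu_m$'s; it is overcome by using Lemma~\ref{l:3.1} to trap each $\mu_n$ inside a functionally bounded set $S_n$ and then exploiting the unboundedness of $g$ on $\supp(\M)$ to push every $U_j$ with $j>n$ entirely off $\overline{S_n}$, producing a vanishing pattern that is compatible with the inductive rescaling.
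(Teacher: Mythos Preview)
Your proof is correct and follows essentially the same strategy as the paper: choose an unbounded continuous $g\ge 0$ on $\supp(\M)$, use Lemma~\ref{l:3.1} to trap each $\mu_n$ in a functionally bounded $S_n$, separate the points $x_n$ by level-set neighborhoods $U_n$ of $g$, and build a locally finite sum $h=\sum_n h_n$ with $\mu_n(h)$ unbounded. The only cosmetic differences are that the paper performs the selection of $x_n,\mu_n,S_n$ in a single induction with the condition $\varphi(x_n)>3+\sup\varphi(\bigcup_{i<n}S_i)$ (so the $O_n$ are discrete rather than merely locally finite), and the paper rescales so that $\mu_n(f_n)>n+\sum_{i<n}|\mu_n(f_i)|$ rather than hitting the value $n$ exactly; one small point you use implicitly is that $g(x_n)\le K_n$ forces $g(x_m)>K_n+2$ for \emph{all} $m>n$ (not just $m=n+1$), which is what makes $U_m\cap\overline{S_n}=\emptyset$ hold for every $m>n$.
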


\begin{proof}
To derive a contradiction, assume that the set $\supp(\M)$ is not functionally bounded in $X$. Then there exists a continuous function $\varphi:X\to[0,\infty)$ such that the set $\varphi(\supp(\M))$ is not bounded in the real line. Inductively we shall choose sequences of functionals $\{\mu_n\}_{n\in\w}\subseteq\M$, points $\{x_n\}_{n\in\w}\subseteq\supp(\M)$ and functionally bounded sets $\{S_n\}_{n\in\w}$ in $X$ such that for every $n\in\w$ the following conditions are satisfied:
\begin{enumerate}
\item $\varphi(x_n)>3+\sup\varphi(\bigcup_{i<n}S_i)$;
\item $x_n\in\supp(\mu_n)\subseteq \overline{S_n}$ and $[S_n;0]\subseteq\mu^{-1}(0)$.
\end{enumerate}

To start the inductive construction, choose any point $x_0\in\supp(\M)$ with $\varphi(x_0)>3$ and find a functional $\mu_0\in\M$ such that $x_0\in\supp(\mu_0)$. By Lemma~\ref{l:3.1}, there exists a functionally bounded set  $S_0\subseteq X$ such that $\supp(\mu_0)\subseteq\overline{S_0}$ and $[S_0,0]\subseteq\mu_0^{-1}(0)$. Assume that for some $n\in\IN$ we have chosen functionally bounded sets $S_0,\dots,S_{n-1}$ in $X$ satisfying the inductive conditions (1) and (2). %Since the family $\mathcal S$ consists of functionally bounded subsets of $X$, the number $\sup\varphi(\bigcup_{i<n}S_i)$ is finite.
As the set $\varphi(\supp(\M))$ is unbounded and the set $\varphi(\bigcup_{i<n}S_i)$ is bounded in the real line, we can find a point $x_n\in\supp(\M)$ satisfying the inductive condition (1). Since $x_n\in\supp(\M)=\bigcup_{\mu\in\M}\supp(\mu)$ we can find a functional $\mu_n\in\M$ such that $x_n\in\supp(\mu_n)$. By Lemma~\ref{l:3.1}, for the functional $\mu_n\in C_b(X)'$ there exists a functionally bounded set $S_n\subseteq X$ satisfying the inductive condition (2). This completes the inductive step.
\smallskip

Now, for every $n\in\w$, consider the open neighborhood $O_n=\{x\in X:|\varphi(x)-\varphi(x_n)|<1\}$ of the point $x_n$. The inductive condition (1) ensures that $\varphi(x_n)-\varphi(x_i)>3$ for any $i<n$, which implies that the family $(O_n)_{n\in\w}$ is discrete in $X$. Moreover, $O_m\cap S_n=\emptyset$ for any numbers $n<m$.

For every $n\in \w$,  the definition of the support $\supp(\mu_n)\ni x_n$ implies the existence of a function $f_n\in C(X)$ such that $\mu_n(f_n)\ne 0$ and $\supp(f_n)\subseteq O_n$. Multiplying $f_n$ by a suitable constant, we can assume that
\begin{equation}\label{eq:choice-fn}
\mu_n(f_n)>n+\sum_{i<n}|\mu_n(f_i)|.
\end{equation}
Since the family $(O_n)_{n\in\w}$ is discrete, so is the family $(\supp(f_n))_{n\in\w}$. Consequently, the function $f=\sum_{n\in\w}f_n:X\to\IF$ is well-defined and continuous.

For every numbers $n<m$ we have $\supp(f_m)\cap S_n\subseteq O_m\cap S_n=\emptyset$ and hence $f{\restriction}_{S_n}=\sum_{i\le n}f_i{\restriction}_{S_n}$. Since $[S_n;0]\subseteq\mu_n^{-1}(0)$, we have
$$\mu_n(f)=\sum_{i\le n}\mu_n(f_i)\ge \mu_n(f_n)-\sum_{i<n}|\mu_n(f_i)|>n$$
according to (\ref{eq:choice-fn}).
Consequently, $\sup\{|\mu(f)|:\mu\in\M\}\ge\sup_{n\in\w}|\mu_n(f)|=\infty$, which contradicts the boundedness of the set $\M$ in $C(X)'_{w^\ast}$.
\end{proof}

The next theorem shows that the JNP has some ``hereditary'' type property with respect to finer locally convex topologies.

\begin{theorem} \label{t:JNP-between}
Let $\gamma X$ be a compactification of a Tychonoff space $X$. Let $Y$ be an lcs between $\CC(X)$ and $C_p(X)$, and let $Z$ be an lcs between $C(\gamma X)$ and $Y$. If the lcs $Y$ has the JNP, then $Z$ has  the JNP, too.
\end{theorem}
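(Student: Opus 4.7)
The proof is a pullback argument. Take a JNP-witness $\{\mu_n\}\subseteq Y'$ for $Y$---weak$^*$-null with $\|\mu_n\|_A>1$ for some barrel-bounded $A\subseteq Y$---and set $\nu_n:=\mu_n\circ T_L\in Z'$. Continuity of $T_L:Z\to Y$ makes $\{\nu_n\}$ automatically weak$^*$-null in $Z'$, so the task is to display a barrel-bounded $B\subseteq Z$ on which $\|\nu_n\|_B$ stays bounded below by a positive constant. The natural candidate for $B$ is $T_E(\mathbb{B})$, where $\mathbb{B}$ is the closed unit ball of the Banach space $C(\gamma X)$ and $T_E:C(\gamma X)\to Z$ is the other operator from the ``between'' data. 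Its barrel-boundedness in $Z$ is immediate: the preimage under $T_E$ of any barrel in $Z$ is a barrel in the barrelled space $C(\gamma X)$, hence a $0$-neighborhood that absorbs $\mathbb{B}$.

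The heart of the argument is then to verify $\|\nu_n\|_{T_E(\mathbb{B})}\not\to 0$ in three steps. First, since $Y$ lies between $\CC(X)$ and $C_p(X)$, each $\mu_n$ is $\TTT_k$-continuous, so by Lemma~\ref{l:3.2} the set $K_n:=\supp(\mu_n)$ is compact in $X$ and $[K_n;0]\subseteq\mu_n^{-1}(0)$; viewing $\{\mu_n\}$ as a bounded sequence in $(C_b(X))'_{w^*}$ and applying Lemma~\ref{l:3.3} makes the total support $K:=\bigcup_n K_n$ functionally bounded in $X$. Second, the set $V_K:=\{f\in C(X):\|f\|_K\le 1\}$ is absolutely convex, absorbing (since $K$ is functionally bounded, so $\|f\|_K<\infty$ for every $f\in C(X)$), and $\TTT_p$-closed, hence closed in the finer topology of $Y$; that is, $V_K$ is a barrel in $Y$. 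Barrel-boundedness of $A$ supplies $N\in\IN$ with $A\subseteq NV_K$, and each witness $f_n\in A$ chosen with $|\mu_n(f_n)|>1$ automatically satisfies $\|f_n\|_{K_n}\le N$. Third, since $K_n$ is compact in $X$ it is closed in $\gamma X$, so by Tietze--Urysohn the restriction $f_n{\restriction}_{K_n}$ extends to some $g_n\in C(\gamma X)$ with $\|g_n\|_\infty\le N$. Setting $z_n:=T_E(g_n/N)\in T_E(\mathbb{B})$ and using $T_L\circ T_E=$ restriction from $\gamma X$ to $X$, together with $[K_n;0]\subseteq\mu_n^{-1}(0)$ to replace $g_n{\restriction}_X$ by $f_n$, yields $|\nu_n(z_n)|=|\mu_n(f_n)|/N>1/N$ for all $n$, as required.

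The main obstacle is the coordinated use of the two support lemmas to manufacture the single functionally bounded set $K$---and from it the single barrel $V_K\subseteq Y$ absorbing $A$. This is precisely where the hypothesis that $Y$ lies between $\CC(X)$ and $C_p(X)$ is indispensable: the embedding $\CC(X)\to Y$ gives $Y'\subseteq\CC(X)'$ and hence compactness of each $\supp(\mu_n)$, while the embedding $Y\to C_p(X)$ (together with the resulting $\TTT_b$-continuity) makes $\{\mu_n\}$ a bounded sequence in $C_b(X)'_{w^*}$ to which Lemma~\ref{l:3.3} applies. Once the uniform estimate $\|f_n\|_{K_n}\le N$ is secured, the Tietze lift into $C(\gamma X)$ and the transport through $T_E$ are routine, and the JNP passes from $Y$ to $Z$.
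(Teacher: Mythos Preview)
Your proof is correct and follows essentially the same route as the paper's: pull back the weak$^*$-null sequence via $T_L$, use Lemmas~\ref{l:3.2} and~\ref{l:3.3} to get compact supports $K_n$ with functionally bounded union $K$, use the barrel $[K;1]$ to obtain a uniform bound $\|f_n\|_{K_n}\le N$, extend each $f_n{\restriction}_{K_n}$ to $g_n\in C(\gamma X)$ via Tietze--Urysohn, and push forward through $T_E$ to obtain the barrel-bounded witness set in $Z$. The only cosmetic difference is that you take $T_E(\mathbb{B})$ as the barrel-bounded set (arguing directly that preimages of barrels under $T_E$ are barrels in the barrelled space $C(\gamma X)$), whereas the paper uses the specific image set $D=\{T_\gamma(g_n)\}$; both choices work for the same reason.
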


\begin{proof}
We can identify the lcs $Y$ with the function space $C(X)$ endowed with a suitable locally convex topology $\TTT$ such that $\TTT_p\subseteq \TTT\subseteq \TTT_k$. %containing the topology of the space $C_p(X)$ and contained in the topology of $C_{\TTT_\gamma}(X)$.
Since $Z$ is between $C(\gamma X)$ and $Y$, there exist injective operators $T_\gamma:C(\gamma X)\to Z$ and $T:Z\to Y$ such that $T\circ T_\gamma(f)=f{\restriction}_X$ for every $f\in C(\gamma X)$.

Assuming that the lcs $Y$ has the JNP, find a null sequence $\{\mu_n\}_{n\in\w}\subseteq Y'_{w^\ast}$ such that $\inf_{n\in\w}\|\mu_n\|_B>\e$ for some barrel-bounded set $B\subseteq Y$ and some $\e>0$. For every $n\in\w$, choose an element $f_n\in B$ such that $|\mu_n(f_n)|>\e$. Since $\TTT\subseteq\TTT_k\subseteq\TTT_b$, the linear functionals $\mu_n\in Y'$ are continuous in the topology $\TTT_b$. The continuity of the identity operator $C_b(X)\to Y$ implies that the sequence $(\mu_n)_{n\in\w}$ is weak$^\ast$ null in $C_b(X)'$.  By Lemma~\ref{l:3.3}, the set $S=\bigcup_{n\in\w}\supp(\mu_n)$ is functionally bounded in $X$.
It follows that the set $[S;1]$ is a barrel in $C_p(X)$ and hence in $Y$. Since  $B$ is barrel-bounded in $Y$, there exists $r\in \IN$ such that $B\subseteq r\cdot [S;1]$.

For every $n\in\w$, the functional $\mu_n$ is continuous in the topology $\TTT\subseteq\TTT_k$ and hence,  by Lemma~\ref{l:3.2}, $\mu_n$ has compact support $\supp(\mu_n)$. By the Tietze--Urysohn Theorem, there exists a continuous function $g_n\in C(\gamma X)$ such that $g_n{\restriction}_{\supp(\mu_n)}=f_n{\restriction}_{\supp(\mu_n)}$ and $\|g_n\|_{\gamma X}=\|f_n\|_{\supp(\mu_n)}$  and hence, by Lemma \ref{l:3.2}, $\mu_n\big(g_n{\restriction}_X\big)=\mu_n(f_n)$.

The definition of the number $r$ guarantees that
\[
\sup_{n\in\w}\|g_n\|_{\gamma X}=\sup_{n\in\w}\|f_n\|_{\supp(\mu_n)}\le \sup_{f\in B}\|f\|_S\le r.
\]
This means that the set $\{g_n\}_{n\in \w}$ is bounded in the Banach space $C(\gamma X)$. Since Banach spaces are barrelled, $\{g_n\}_{n\in\w}$ is barrel-bounded in $C(\gamma X)$. Now the continuity of the operator $T_\gamma:C(\gamma X)\to Z$ ensures that the set $D=\{T_\gamma(g_n)\}_{n\in\w}$ is barrel-bounded in $Z$. For every $n\in\w$, consider the linear continuous functional $\lambda_n=\mu_n\circ T\in Z'$.
%, where $T^\ast:Y'\to Z'$ is the adjoint operator to $T$.
The convergence $\mu_n\to  0$ in $Y'_{w^\ast}$ implies the convergence $\lambda_n\to 0$ in $Z'_{w^\ast}$.

Finally, observe that for every $n\in\w$ we have
\[
\lambda_n(T_\gamma(g_n))=\mu_n(T\circ T_\gamma(g_n))=\mu_n(g_n{\restriction}X)=\mu_n(f_n)
\]
 and hence
$
\|\lambda_n\|_D\ge|\mu_n(f_n)|>\e
$
 and $\|\lambda_n\|_D\not\to 0$, witnessing that the lcs $Z$ has the JNP.
\end{proof}
\smallskip

Applying Theorem \ref{t:JNP-between} to $\gamma X=\beta X$, $Y=C_p(X)$ and $Z=\CC(X)$ we obtain

\begin{corollary} \label{c:JNP-Cp-Ck}
Let $X$ be a Tychonoff space such that the space $C_p(X)$ has the JNP. Then also the space $\CC(X)$ has the JNP. Moreover, the weak$^\ast$ null sequence $\{\mu_n\}_{n\in\w}$ in the dual space $\CC(X)'$ witnessing the JNP of $\CC(X)$ can be chosen such that all $\mu_n$ have finite support.
\end{corollary}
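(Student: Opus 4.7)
The plan is to obtain this as an almost immediate consequence of Theorem~\ref{t:JNP-between} applied with the parameters $\gamma X=\beta X$, $Y=C_p(X)$, and $Z=\CC(X)$. First I would verify the hypotheses of Definition~\ref{def:between}. The space $Y=C_p(X)$ is trivially between $\CC(X)$ and $C_p(X)$: take $T_E\colon \CC(X)\to C_p(X)$ to be the (continuous, injective) identity and $T_L\colon C_p(X)\to C_p(X)$ the identity; their composition is the canonical continuous injection $\CC(X)\to C_p(X)$. The space $Z=\CC(X)$ sits between $C(\beta X)$ and $Y=C_p(X)$ via $T_\gamma\colon C(\beta X)\to \CC(X)$, $f\mapsto f{\restriction}_X$ (continuous and injective because $X$ is dense in $\beta X$), and the identity $T\colon \CC(X)\to C_p(X)$; their composition is the restriction operator $I\colon C(\beta X)\to C_p(X)$, $f\mapsto f{\restriction}_X$. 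Thus Theorem~\ref{t:JNP-between} immediately gives that $\CC(X)$ has the JNP.

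For the ``moreover'' statement I would trace through the construction in the proof of Theorem~\ref{t:JNP-between}. Starting from a weak$^\ast$ null sequence $\{\mu_n\}_{n\in\w}\subseteq C_p(X)'$ witnessing the JNP of $C_p(X)$, the sequence produced as a witness for $Z=\CC(X)$ is $\lambda_n:=\mu_n\circ T$. Since in our case $T\colon \CC(X)\to C_p(X)$ is the identity, $\lambda_n$ coincides with $\mu_n$ as a linear form on $C(X)$, and in particular $\supp(\lambda_n)=\supp(\mu_n)\subseteq X$. It remains to observe that every $\mu\in C_p(X)'$ is finitely supported: applying Lemma~\ref{l:3.1} to the directed family $\mathcal S$ of finite subsets of $X$ (which induces the topology $\TTT_p$) yields a finite set $F\subseteq X$ with $\supp(\mu)\subseteq \overline F=F$. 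Hence each $\lambda_n=\mu_n$ is finitely supported, giving the desired refinement of the witnessing sequence for $\CC(X)$.

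There is no real obstacle: the content is the correct bookkeeping of the operators in Definition~\ref{def:between} and the observation that when $T$ is an identity, the pullback $\mu\mapsto\mu\circ T$ preserves both weak$^\ast$ convergence and the support of the functional, so the finite-support property transfers automatically from $C_p(X)'$ to $\CC(X)'$.
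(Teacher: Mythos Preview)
Your proposal is correct and follows exactly the same approach as the paper: applying Theorem~\ref{t:JNP-between} with $\gamma X=\beta X$, $Y=C_p(X)$, and $Z=\CC(X)$. Your added verification of the ``moreover'' clause---observing that $T$ is the identity so $\lambda_n=\mu_n$, and that elements of $C_p(X)'$ are finitely supported by Lemma~\ref{l:3.1}---is a correct unpacking of what the paper leaves implicit.
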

\smallskip

\begin{corollary} \label{c:JNP-Ck-C-gamma}
Let $X$ be a Tychonoff space such that the space $\CC(X)$ has the JNP. Then:
\begin{enumerate}
\item[{\rm(i)}]  the spaces  $\CC(\mu X)$, $\CC(\upsilon X)$, $C(\beta X)$ and $C_b(X)$ have the JNP;
\item[{\rm(ii)}]  the spaces  $\CC^b(\mu X)$, $\CC^b(\upsilon X)$, and $C_b^b(X)$ have the JNP.
\end{enumerate}
Moreover, the weak$^\ast$ null sequence $\{\mu_n\}_{n\in\w}$ in their dual spaces witnessing the JNP can be chosen such that all $\mu_n$ have compact support contained in $X$.
\end{corollary}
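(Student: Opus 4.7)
The plan is to invoke Theorem~\ref{t:JNP-between} seven times, taking $\gamma X=\beta X$, $Y=\CC(X)$, and letting $Z$ range through the seven function spaces appearing in (i) and (ii). For each such $Z$ I need injective continuous operators $T_\gamma:C(\beta X)\to Z$ and $T:Z\to \CC(X)$ satisfying $T\circ T_\gamma(f)=f{\restriction}_X$. In every case the natural candidates are iterated restriction maps: $T_\gamma$ restricts a function on $\beta X$ to the intermediate carrier ($\mu X$, $\upsilon X$, $\beta X$ itself, or $X$), and $T$ restricts further to $X$. Injectivity is immediate from the density of $X$ in each completion and of each completion in $\beta X$; continuity follows from the chain $\TTT_p\subseteq\TTT_k\subseteq\TTT_b$ together with the fact that every compact or functionally bounded subset of an intermediate carrier lies inside the compact space $\beta X$. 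For the bounded variants $\CC^b$ and $C_b^b$ the restrictions automatically land in the bounded subspace because $\beta X$ is compact. Theorem~\ref{t:JNP-between} then yields the JNP for each of the seven spaces.

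For the ``moreover'' clause I would trace through the witnessing sequence constructed in the proof of Theorem~\ref{t:JNP-between}: there the witnessing functionals on $Z$ are $\lambda_n:=\mu_n\circ T\in Z'$, where $\{\mu_n\}_{n\in\w}\subseteq \CC(X)'$ is a weak$^\ast$ null sequence that, by Lemma~\ref{l:3.2}, consists of functionals with compact support $\supp(\mu_n)\subseteq X$ satisfying $[\supp(\mu_n);0]\subseteq\mu_n^{-1}(0)$. The crucial claim is that the support of $\lambda_n$, computed in the ambient carrier $M\in\{\mu X,\upsilon X,\beta X,X\}$ of $Z$, is contained in $\supp(\mu_n)$. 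Given $y\in M\setminus\supp(\mu_n)$, the compactness of $\supp(\mu_n)\subseteq X$ in $M$ produces a neighborhood $O_y\subseteq M$ of $y$ disjoint from $\supp(\mu_n)$; for any $g\in C(M)$ with $\supp_M(g)\subseteq O_y$ the restriction $T(g)=g{\restriction}_X$ vanishes on $\supp(\mu_n)$, whence $\lambda_n(g)=\mu_n(T(g))=0$, showing $y\notin\supp(\lambda_n)$. Thus $\supp(\lambda_n)$ is a closed subset of the compact set $\supp(\mu_n)\subseteq X$ and is itself compact and contained in $X$, which is exactly what (i) and (ii) demand.

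The step I anticipate as mildly delicate is the support-tracking argument above: one must interpret $\supp(\lambda_n)$ correctly relative to the ambient carrier $M$ of $Z$, and invoke the compactness of $\supp(\mu_n)$ (viewed inside $M$) to obtain the separating neighborhood $O_y$. The remainder is a systematic checklist verification of the ``between'' hypothesis of Theorem~\ref{t:JNP-between}, which is routine given the uniform pattern of restriction operators; no new estimates beyond those already supplied by Lemma~\ref{l:3.2} and Lemma~\ref{l:3.3} are required.
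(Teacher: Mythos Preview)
Your proposal is correct and follows essentially the same approach as the paper: the paper's proof is a single sentence invoking Theorem~\ref{t:JNP-between} with $\gamma X=\beta X$, $Y=\CC(X)$, and $Z$ ranging over the seven spaces, leaving all verifications implicit. Your version simply fills in the details the paper omits---the routine check that each $Z$ sits between $C(\beta X)$ and $\CC(X)$ via restriction maps, and the support-tracking argument for the ``moreover'' clause---so there is no substantive difference in strategy.
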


\begin{proof}
The corollary follows from Theorem \ref{t:JNP-between} applied to  $\gamma X=\beta X$, $Y=\CC(X)$ and $Z$ is one of the spaces from (i) and (ii).
\end{proof}

If $E$ is a locally convex space, we denote  by $E_\beta$ the space $E$ endowed with the locally convex topology $\beta(E,E')$ whose neighborhood base at zero consists of barrels.
To characterize function spaces $C_p(X)$ and $\CC(X)$ having the JNP we shall use the following proposition.

\begin{proposition} \label{p:bb-Ck}
Let $X$ be a Tychonoff space, and let $\TTT$ be a locally convex topology on $C(X)$ such that $\TTT_p\subseteq \TTT\subseteq \TTT_k$. Then:
\begin{enumerate}
\item[{\rm(i)}] for every barrel $D$ in $C_\TTT(X)$, there are a functionally bounded subset $A$ of $X$ and $\e>0$ such that $[A;\e]\subseteq D$.
\item[{\rm(ii)}] a subset $\F\subseteq C_\TTT(X)$  is barrel-bounded if and only if for any functionally bounded set $A\subseteq X$, the set $\F(A):=\bigcup_{f\in\F}f(A)$ is bounded in $\IF$;
\item[{\rm(iii)}] $\big(C_\TTT(X)\big)_{\!\beta} = C_b(X)$.
%\item[{\rm(iv)}] $C_b(X)=\CC(X)$ if and only if $X$ is a $\mu$-space.
%\item[{\rm(v)}] if $(\mu_n)_{n\in\w}$ is a null sequence in $\big(C_\TTT(X)\big)'_{w^\ast}$, then the set $S:=\bigcup_{n\in\w} \supp(\mu_n)$ is a functionally bounded subset of $X$.
\end{enumerate}
\end{proposition}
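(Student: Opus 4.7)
The plan is to prove part (i) first, since once we know that every barrel in $C_\TTT(X)$ contains a set of the form $[A;\e]$ with $A$ functionally bounded, parts (ii) and (iii) drop out by routine polar-calculus together with the observation that each $[A;\e]$ is itself a barrel in $C_\TTT(X)$: it is $\TTT_p$-closed (hence $\TTT$-closed) as an intersection of closed half-strips, absolutely convex, and absorbing because each continuous function is bounded on $A$. For (ii), barrel-boundedness of $\F$ is then equivalent to $\sup_{f\in\F}\|f\|_A<\infty$ for every functionally bounded $A$, which is exactly boundedness of $\F(A)$ in $\IF$. For (iii), the sets $[A;\e]$ form simultaneously a neighborhood base at $0$ for $\TTT_b$ and a cofinal family of barrels, whose polars are the canonical base at $0$ for $\beta(C(X),C_\TTT(X)')$, so the two topologies coincide.

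To prove (i), I fix a barrel $D\subseteq C_\TTT(X)$ and work with its polar $D^\circ\subseteq C_\TTT(X)'$. Two ingredients are needed: a uniform norm bound on $D^\circ$ and a functionally bounded set containing all its supports. For the uniform bound, I first use that $D$ is absorbing to get pointwise boundedness of $D^\circ$ on every $f\in C(X)$, in particular on $C^b(X)$. The inclusion $\TTT\subseteq \TTT_k$ forces each $\mu\in D^\circ$ to be $\TTT_k$-continuous on $C(X)$ and hence $\|\cdot\|_\infty$-continuous on $C^b(X)$; the canonical isometric identification $(C^b(X),\|\cdot\|_\infty)\cong C(\beta X)$ then turns $\{\mu|_{C^b(X)}:\mu\in D^\circ\}$ into a weak$^\ast$-bounded family in the dual of the Banach space $C(\beta X)$, and the Banach--Steinhaus theorem yields $M>0$ with $|\mu(g)|\le M\|g\|_\infty$ for every $g\in C^b(X)$ and $\mu\in D^\circ$. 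For the support, since $D^\circ$ is $\sigma(C_b(X)',C_b(X))$-bounded, Lemma~\ref{l:3.3} gives that $A:=\overline{\supp(D^\circ)}$ is functionally bounded in $X$ (closures of functionally bounded sets remain functionally bounded).

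The claim is then $[A;1/M]\subseteq D$. Given $f\in[A;1/M]$ and $\mu\in D^\circ$, Lemma~\ref{l:3.2} makes $K:=\supp(\mu)$ a compact subset of $A$, so $\|f{\restriction}_K\|_\infty\le 1/M$. Extending $f{\restriction}_K$ via Tietze--Urysohn on the compact space $\beta X$ produces $g\in C^b(X)$ with $g{\restriction}_K=f{\restriction}_K$ and $\|g\|_\infty\le 1/M$. Since $(f-g){\restriction}_K=0$, Lemma~\ref{l:3.2} applied once more gives $\mu(f-g)=0$, hence $|\mu(f)|=|\mu(g)|\le M\cdot(1/M)=1$. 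As $\mu\in D^\circ$ was arbitrary, the bipolar theorem puts $f\in(D^\circ)^\circ=D$. The step I expect to be most delicate is precisely this Banach--Steinhaus argument: $C_\TTT(X)$ itself need not be barrelled, so the uniform bound has to be routed through the Banach space $C(\beta X)$, which works only because the sandwich $\TTT_p\subseteq\TTT\subseteq\TTT_k$ forces each $\mu|_{C^b(X)}$ into $C(\beta X)'$ while still letting $D$ absorb arbitrary elements of $C(X)$ to guarantee pointwise boundedness on the Banach-space side.
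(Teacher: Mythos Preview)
Your proof of (i) is correct but follows a genuinely different path from the paper's. The paper pulls the barrel $D$ back along the restriction isomorphism $R:C_k(\upsilon X)\to C_k(X)$, invokes the Nachbin--Shirota theorem to know that $C_k(\upsilon X)$ is barrelled (since $\upsilon X$ is a $\mu$-space), and thereby obtains a compact $K\subseteq\upsilon X$ with $[K;\e]\subseteq R^{-1}(D)$; it then sets $A=K\cap X$ and finishes with a Tietze--Urysohn contradiction argument to show $[A;\e]\subseteq D$. Your route instead stays in $X$ and works dually: you bound $D^\circ$ uniformly in norm via Banach--Steinhaus on the Banach space $C(\beta X)\cong C^b(X)$, use Lemma~\ref{l:3.3} to confine the supports of $D^\circ$ to a functionally bounded $A$, and then recover $[A;1/M]\subseteq D$ through Lemma~\ref{l:3.2} and the bipolar theorem. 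The paper's approach is shorter once Nachbin--Shirota is taken as a black box and ties the result to the classical barrelledness criterion for $C_k$-spaces; yours is more self-contained within the paper (Lemmas~\ref{l:3.2} and~\ref{l:3.3} are already proved there) and its bipolar argument is cleaner than the paper's closing contradiction step. One cosmetic remark: in your sketch of (iii), the phrase about ``polars \dots\ base for $\beta(C(X),C_\TTT(X)')$'' is off---$\beta(E,E')$ lives on $E$, and all you need is that the $[A;\e]$'s are themselves barrels and (by (i)) cofinal among barrels, hence a base for $\beta(E,E')=\TTT_b$.
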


\begin{proof}
(i) Let $\upsilon X$ be the realcompactification of $X$, and let $R:\CC(\upsilon X)\to \CC(X)$, $R:f\mapsto f{\restriction}_X$, be the restriction operator. Since every continuous function $f:X\to \ff$ admits a unique continuous extension to $\upsilon X$, the operator $R$ is bijective. As $\upsilon X$ is a $\mu$-space, the Nachbin--Shirota theorem \cite[11.7.5]{Jar} implies that $\CC(\upsilon X)$ is barrelled. The continuity of the operators $\CC(\upsilon X)\to \CC(X)\to C_\TTT(X)$ implies that for every barrel $D$ in $C_\TTT(X)$, the preimage $R^{-1}(D)$ is a barrel in $\CC(\upsilon X)$. Since the space $\CC(\upsilon X)$ is barrelled, $R^{-1}(D)$ is a neighborhood of zero. So, there exists a compact subset $K\subseteq\upsilon X$ and $\e>0$ such that $[K;\e]\subseteq R^{-1}(D)$. It follows that the set $A=K\cap X$ is functionally bounded and closed in $X$. It is clear that the set $[A;\e]$ is a barrel in $C_p(X)$ and hence also in $C_\TTT(X)$. We claim that $[A;\e]\subseteq D$. Indeed, suppose for a contradiction that $[A;\e]\setminus D$ contains some function $f$.  Since the barrel $D$ is closed in $\TTT$ and the identity operator $\CC(X)\to C_\TTT(X)$ is continuous, there exist $\delta>0$ and a compact set $C\subseteq X$ such that $(f+[C;\delta])\cap D=\emptyset$. If follows from $f\in [A;\e]$ that $f[C\cap K]=f[C\cap A]\subseteq[-\e,\e]$. By the Tietze--Urysohn Theorem, there exists a continuous function $g:K\to [-\e,\e]$ such that $g(x)=f(x)$ for every $x\in K\cap C$. Define the function $\varphi:C\cup K\to\ff$ by the formula
\[
\varphi(x)=\begin{cases}
f(x)&\mbox{if $x\in C$};\\
g(x)&\mbox{if $x\in K$};
\end{cases}
\]
and observe that it is well-defined and continuous. By the Tietze--Urysohn Theorem, the function $\varphi$ can be extended to a bounded continuous function $\psi:\upsilon X\to\ff$. Then $\psi{\restriction}_X\in (f+[C;\delta])\cap R([K;\e])\subseteq (f+[C;\delta])\cap D$, which contradicts the choice of $C$ and $\delta$. This contradiction shows that $B\subseteq D$.
\smallskip

(ii) To prove the ``only if'' part, take a functionally bounded set $A\subseteq X$ and observe that the set $[A;1]$ is a barrel in $C_p(X)$, and hence $B$ is a barrel in $C_\TTT(X)$. If $\F$ is barrel-bounded, then $\F\subseteq n\cdot [A;1]$ for some  $n\in\w$, which implies $\F(A)\subseteq \{z\in\IF:|z|\le n\}$.
\smallskip

To prove the ``if'' part, assume that for any functionally bounded set $A\subseteq X$, the set $\F(A)$ is bounded in $\IF$. To see that $\F$ is barrel-bounded in $C_\TTT(X)$, fix any barrel $D\subseteq C_\TTT(X)$. By (i), there are a  functionally bounded set $Z$ in $X$  and $\e>0$ such that $[Z;\e]\subseteq D$. It is clear that $[Z;\e]$ is a barrel in $C_p(X)$ and hence in $C_\TTT(X)$. By our assumption, the set $\F(Z)$ is bounded, and hence the real number $r=\sup\{|f(x)|:f\in\F,\;x\in Z\}$ is well-defined. It is clear that $\F \subseteq [Z;r]=\tfrac{r}{\e} [Z;\e]\subseteq\tfrac{r}{\e}D$. Thus $\F$ is barrel-bounded.
\smallskip

(iii) Set $E=C_\TTT(X)$. By (i), the family $\mathcal{B}$ of sets of the form $[A;\e]$, where $A$ is functionally bounded in $X$ and $\e>0$, is a neighborhood base at zero of the topology $\beta(E,E')$. On the other hand, by the definition of $\TTT_b$, the family  $\mathcal{B}$  is also  a neighborhood base at zero of the topology $\TTT_b$. Thus $\beta(E,E')=\TTT_b$, as desired.
\end{proof}

In the following theorem for a Tychonoff space $X$ and a functional $\mu\in C(X)'$ we put
\[
\|\mu\|:=\|\mu\|_{[X;1]}.
\]

\begin{theorem}  \label{t:JNP-C-norm}
Let $X$ be a Tychonoff space, and let $\TTT$ be a locally convex topology on $C(X)$ such that $\TTT_p\subseteq\TTT\subseteq\TTT_k$. The function space $C_{\TTT}(X)$ has the JNP if and only if there is a null sequence $\{\mu_n\}_{n\in\w}\subseteq C_{\TTT}(X)'_{w^\ast}$ such that $\|\mu_n\|=1$  for every $n\in\w$.
\end{theorem}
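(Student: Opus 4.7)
The plan uses Proposition~\ref{p:bb-Ck} to identify the barrel-bounded subsets of $C_\TTT(X)$ with those that are uniformly bounded on every functionally bounded subset of $X$, together with Lemmas~\ref{l:3.2} and~\ref{l:3.3} to control the (compact) supports of the functionals involved.

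For the ``if'' direction, the argument is immediate. If $\{\mu_n\}_{n\in\w}$ is weak$^\ast$ null with $\|\mu_n\|_{[X;1]}=1$, then since $[X;1](A)\subseteq\{t\in\IF:|t|\le 1\}$ for every functionally bounded $A\subseteq X$, Proposition~\ref{p:bb-Ck}(ii) gives that $[X;1]$ is itself barrel-bounded in $C_\TTT(X)$. Hence $\|\mu_n\|_{[X;1]}=1\not\to 0$ witnesses that $\{\mu_n\}$ does not converge in $\beta^\ast(C_\TTT(X)',C_\TTT(X))$, which is the JNP.

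For the converse, suppose $C_\TTT(X)$ has the JNP and pick a weak$^\ast$ null sequence $\{\nu_n\}\subseteq C_\TTT(X)'$, a barrel-bounded $B\subseteq C_\TTT(X)$ and $\e>0$ with $\|\nu_n\|_B>\e$ for all $n$ (after passing to a subsequence), along with elements $f_n\in B$ satisfying $|\nu_n(f_n)|>\e$. Since $\TTT\subseteq \TTT_b$, the sequence is also weak$^\ast$ null in $C_b(X)'$, so Lemma~\ref{l:3.3} makes $S:=\bigcup_n\supp(\nu_n)$ functionally bounded in $X$, and Proposition~\ref{p:bb-Ck}(ii) then forces $r:=\sup\{|f(x)|:f\in B,\,x\in S\}<\infty$. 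Lemma~\ref{l:3.2} further gives that each $K_n:=\supp(\nu_n)$ is compact and that $[K_n;0]\subseteq\nu_n^{-1}(0)$.

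Normalization is the heart of the proof. Tietze--Urysohn in $\beta X$ extends $f_n{\restriction}_{K_n}$ to some $\tilde g_n\in C(\beta X)$ with $\|\tilde g_n\|_{\beta X}=\|f_n{\restriction}_{K_n}\|_{K_n}\le r$, and $g_n:=\tilde g_n{\restriction}_X$ satisfies $g_n{\restriction}_{K_n}=f_n{\restriction}_{K_n}$; hence $g_n-f_n\in[K_n;0]\subseteq\nu_n^{-1}(0)$, giving $\nu_n(g_n)=\nu_n(f_n)$ and $g_n/r\in[X;1]$, so $\|\nu_n\|_{[X;1]}\ge\e/r>0$. The same Tietze trick identifies $\nu_n$ with a bounded functional $\tilde\nu_n$ on the Banach space $C(K_n)$ (the restriction $f\mapsto f{\restriction}_{K_n}$ sends $[X;1]$ onto the unit ball of $C(K_n)$), and the $\TTT_k$-continuity of $\nu_n$ forces $\|\nu_n\|_{[X;1]}=\|\tilde\nu_n\|<\infty$. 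Setting $\mu_n:=\nu_n/\|\nu_n\|_{[X;1]}$ then yields $\|\mu_n\|_{[X;1]}=1$, and the uniform lower bound $\|\nu_n\|_{[X;1]}\ge\e/r$ ensures $\mu_n\to 0$ weak$^\ast$, since $|\mu_n(f)|\le(r/\e)|\nu_n(f)|\to 0$ for every $f\in C(X)$. The main obstacle is precisely obtaining both estimates on $\|\nu_n\|_{[X;1]}$ simultaneously---the uniform lower bound (needed to preserve the weak$^\ast$ limit after division) and the finiteness (needed for the division to be meaningful)---and both are achieved by exploiting the compactness of $\supp(\nu_n)$ through Tietze extensions: this is what transports the lower bound from the abstract barrel-bounded set $B$ to the canonical set $[X;1]$.
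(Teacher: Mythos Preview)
Your proof is correct and follows essentially the same route as the paper's: both directions rely on Proposition~\ref{p:bb-Ck}(ii) to identify $[X;1]$ as barrel-bounded, and the ``only if'' direction uses Lemma~\ref{l:3.3} for the functionally bounded union of supports, Lemma~\ref{l:3.2} for compactness of each $\supp(\nu_n)$ together with $[\supp(\nu_n);0]\subseteq\nu_n^{-1}(0)$, and Tietze--Urysohn to transfer the estimate from the abstract barrel-bounded set $B$ to $[X;1]$. The only cosmetic difference is that the paper phrases the key step contrapositively (assuming $\|\mu_n\|\to 0$ and deriving $\beta^\ast$-convergence for \emph{every} barrel-bounded $B$), whereas you fix the witnessing $B$ at the outset and establish the uniform lower bound $\|\nu_n\|_{[X;1]}\ge \e/r$ directly; the Tietze argument is identical in both.
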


\begin{proof}
To prove the ``if'' part, assume that the weak dual $E'_{w^\ast}$ of the locally convex space $E=C_{\TTT}(X)$ contains a null sequence $\{\mu_n\}_{n\in\w}\subseteq E'_{w^\ast}$ such that $\|\mu_n\|=1$ for all $n\in\w$.  To see that $E$ has the Josefson--Nissenzweig property, it suffices to show that the sequence $\{\mu_n\}_{n\in\w}$ diverges in the topology $\beta^\ast(E',E)$.

Since the null sequence $\M=\{\mu_n\}_{n\in\w}\subseteq E'_{w^\ast}$ is bounded, we can apply Lemma \ref{l:3.3} and conclude that the set $\supp(\M)$ is
functionally bounded in $X$. By (ii) of Proposition \ref{p:bb-Ck}, the set $[X;1]$ is barrel-bounded in $E$. Since $\|\mu_n\|_{[X;1]}=\|\mu_n\|=1\not\to 0$,  the sequence $\{\mu_n\}_{n\in\w}$ does not converge to zero in the topology $\beta^\ast(E',E)$. Thus the identity map $E'_{w^\ast}\to E'_{\beta^\ast}$ is not sequentially continuous and $E=C_\TTT(X)$ has the JNP.
\smallskip

To prove the ``only if'' part, assume that the space $E=C_\TTT(X)$ has the Josefson--Nissenzweig property and hence there is a sequence $\M=\{\mu_n\}\subseteq E'_{w^\ast}$ that converges to zero in the topology $\sigma(E',E)$ but not in the topology $\beta^\ast(E',E)$. By Lemma  \ref{l:3.3}, the set $\supp(\M)$ is functionally bounded in $X$.

We claim that $\|\mu_n\|\not\to 0$. Indeed, suppose for a contradiction that $\lim_{n}\|\mu_n\|=0$. In this case we shall prove that the sequence $\{\mu_n\}_{n\in\w}$ converges to zero in the topology $\beta^\ast(E',E)$. Given any barrel-bounded set $ B\subseteq E$, we need to find an $n\in\w$ such that $\|\mu_k\|_B\le 1$ for every $k\ge n$.  By (ii) of Proposition~\ref{p:bb-Ck}, the number
\[
r:=\sup\{|f(x)|:f\in B,\;x\in\supp(\M)\}
\]
is finite. Since $\|\mu_n\|\to 0$, there exists a number $m\in\w$ such that $r\cdot \|\mu_k\|\le1$ for all $k\ge m$. Fix $k\geq m$ and choose an arbitrary  $f\in B$. Since $\TTT\subseteq \TTT_k$, the functional $\mu_k$ has  compact support $\supp(\mu_n)$, according to Lemma~\ref{l:3.2}. By the Tietze--Urysohn Theorem, there is an extension ${\hat f}\in C(X)$ of the function $f{\restriction}_{\supp(\mu_n)}$ such that $\|{\hat f}\|_{X}=\|f\|_{\supp(\mu_n)}\leq r$. By Lemma \ref{l:3.2}, we have $\mu_k({\hat f})=\mu_k(f)$. Therefore
\[
|\mu_k(f)|= \big|\mu_k({\hat f})\big|\leq \|\mu_k\|\cdot \|{\hat f}\|_{X}\le \|\mu_k\|\cdot r\le 1,
\]
which implies that $\|\mu_k\|_B\le 1$. Therefore, the sequence $\{\mu_k\}_{k\in\w}$ converges to zero in the topology $\beta^\ast(E',E)$, which is a desired contradiction.

By the claim, there is  $\e>0$ such that the set $\Omega=\{n\in\w:\|\mu_n\|\ge\e\}$ is infinite.
Write $\Omega$ as $\Omega=\{n_k\}_{k\in\w}$,  where $n_0<n_1<\cdots$, and observe that the sequence $\{\eta_k\}_{k\in\w}$ of functionals
\[
\eta_k :=\frac{\mu_{n_k}}{\|\mu_{n_k}\|}
\]
converges to zero in the topology $\sigma(E',E)$ and consists of functionals of norm 1.
\end{proof}

The next corollary of Theorem~\ref{t:JNP-C-norm} and Lemma~\ref{l:3.1} shows that for $C_p$-spaces the Josefson--Nissenzweig property introduced in Definition~\ref{def:JNP} is equivalent to the JNP introduced in \cite{BKS}.

\begin{corollary}  \label{c:JNP-Cp}
For a Tychonoff space $X$, the function space $C_p(X)$ has the JNP if and only if there is a null sequence $\{\mu_n\}_{n\in\w}\subseteq C_p(X)'_{w^\ast}$ that consists of finitely supported sign-measures of norm 1.
\end{corollary}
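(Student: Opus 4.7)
The plan is to derive this as an easy specialization of Theorem~\ref{t:JNP-C-norm} to the topology $\TTT_p$, together with Lemma~\ref{l:3.1} applied to the directed family of finite subsets of $X$ (which generates $\TTT_p$). The only substantive point is to translate the condition ``$\|\mu_n\|=1$'' appearing in Theorem~\ref{t:JNP-C-norm} into the statement ``$\mu_n$ is a finitely supported sign-measure of total variation $1$.''

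For the ``only if'' direction, I would first invoke Theorem~\ref{t:JNP-C-norm} (with $\TTT=\TTT_p$) to obtain a $\sigma(C_p(X)',C_p(X))$-null sequence $\{\mu_n\}_{n\in\w}\subseteq C_p(X)'$ with $\|\mu_n\|=1$ for every $n\in\w$. Since each $\mu_n$ is continuous on $C_p(X)$, Lemma~\ref{l:3.1}, applied to the family $\mathcal S$ of finite subsets of $X$, gives a finite set $S_n\subseteq X$ such that $\supp(\mu_n)\subseteq \overline{S_n}=S_n$ and $[S_n;0]\subseteq\mu_n^{-1}(0)$. In particular, $\mu_n(f)=0$ whenever $f{\restriction}_{S_n}=0$, so $\mu_n$ factors through the finite-dimensional restriction map $C(X)\to \IF^{S_n}$, $f\mapsto f{\restriction}_{S_n}$. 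Hence there exist scalars $(a_x)_{x\in S_n}$ with $\mu_n=\sum_{x\in S_n}a_x\delta_x$, where $\delta_x$ is the Dirac measure at $x$; that is, $\mu_n$ is a finitely supported sign-measure.

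It remains to identify the norm. For $\mu_n=\sum_{x\in S_n}a_x\delta_x$ we clearly have $|\mu_n(f)|\le\sum_{x\in S_n}|a_x|$ whenever $\|f\|_X\le 1$, so $\|\mu_n\|\le\sum_{x\in S_n}|a_x|=|\mu_n|(X)$. Conversely, by the Tietze--Urysohn theorem (applied on the Stone--\v Cech compactification, or directly, since $S_n$ is a finite discrete subset of the Tychonoff space $X$) there exists $f\in C^b(X)$ with $\|f\|_X\le 1$ and $f(x)=\overline{\operatorname{sign}(a_x)}$ for each $x\in S_n$, giving $\mu_n(f)=\sum_{x\in S_n}|a_x|$. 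Thus $\|\mu_n\|=|\mu_n|(X)=1$, as required.

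For the ``if'' direction, any weak$^\ast$ null sequence of finitely supported sign-measures of total variation $1$ satisfies $\|\mu_n\|=|\mu_n|(X)=1$ (the same Tietze argument as above), so Theorem~\ref{t:JNP-C-norm} directly yields the JNP of $C_p(X)$. The only mild obstacle is the norm-identification step in the previous paragraph; everything else is a routine unpacking of Theorem~\ref{t:JNP-C-norm} and Lemma~\ref{l:3.1}.
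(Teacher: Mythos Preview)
Your proposal is correct and follows exactly the route indicated in the paper, which simply states that the corollary follows from Theorem~\ref{t:JNP-C-norm} and Lemma~\ref{l:3.1}. You have merely unpacked the details---in particular the identification $\|\mu\|_{[X;1]}=|\mu|(X)$ for finitely supported functionals---that the paper leaves implicit.
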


Applying Theorem \ref{t:JNP-C-norm} and Lemma~\ref{l:3.1}  to the compact-open topology $\TTT=\TTT_k$ on $C(X)$, we obtain

\begin{corollary}  \label{c:JNP-Ck}
For a Tychonoff space $X$, the function space $\CC(X)$ has the JNP if and only if there is a null sequence $\{\mu_n\}_{n\in\w}\subseteq \CC(X)'_{w^\ast}$ that consists of compactly supported sign-measures of norm 1.
\end{corollary}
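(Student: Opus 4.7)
The plan is to derive this from Theorem~\ref{t:JNP-C-norm} applied to the compact-open topology together with the structural information about $\TTT_k$-continuous functionals provided by Lemma~\ref{l:3.2}; the only substantive addition is the identification of such functionals with compactly supported sign-measures via the Riesz representation theorem.

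First, apply Theorem~\ref{t:JNP-C-norm} with $\TTT=\TTT_k$ (which trivially satisfies $\TTT_p\subseteq\TTT_k\subseteq\TTT_k$). This already gives the equivalence of the JNP of $\CC(X)$ with the existence of a null sequence $\{\mu_n\}_{n\in\w}\subseteq \CC(X)'_{w^\ast}$ with $\|\mu_n\|=1$ for all $n$. So the only remaining task in the ``only if'' direction is to argue that each such $\mu_n$ may be represented as a compactly supported sign-measure of total variation $1$, and in the ``if'' direction to note that such a sign-measure automatically belongs to $\CC(X)'$.

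For the functional representation, I would proceed as follows. By Lemma~\ref{l:3.2}, each $\mu_n\in \CC(X)'$ has compact support $K_n\subseteq X$, and $[K_n;0]\subseteq \mu_n^{-1}(0)$; equivalently, $\mu_n(f)=\mu_n(g)$ whenever $f{\restriction}_{K_n}=g{\restriction}_{K_n}$. Combined with the Tietze--Urysohn theorem, this shows that $\mu_n$ factors through the restriction map $\CC(X)\to C(K_n)$, $f\mapsto f{\restriction}_{K_n}$, inducing a continuous linear functional $\tilde\mu_n$ on the Banach space $C(K_n)$. By the classical Riesz representation theorem, $\tilde\mu_n$ is given by a regular Borel sign-measure on the compact space $K_n$, whose total variation equals its dual norm. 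Extending this measure by zero outside $K_n$ yields a compactly supported sign-measure on $X$ that represents $\mu_n$.

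It remains to match norms. By construction $\|\mu_n\|=\|\mu_n\|_{[X;1]}=\sup\{|\mu_n(f)|:f\in C(X),\ \|f\|_X\le 1\}$; using once more the Tietze--Urysohn extension of functions from $K_n$ to $X$ without increasing the supremum norm (as in the proof of Theorem~\ref{t:JNP-C-norm}), this coincides with the dual norm of $\tilde\mu_n$ on $C(K_n)$, which is the total variation of the associated sign-measure. Thus the condition $\|\mu_n\|=1$ translates exactly into ``$\mu_n$ is a compactly supported sign-measure of norm~$1$''. The converse direction is immediate, since every compactly supported sign-measure defines, via integration, an element of $\CC(X)'_{w^\ast}$ of the same norm, so the criterion of Theorem~\ref{t:JNP-C-norm} is satisfied. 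The only mildly delicate point in the whole argument is the norm-preservation step, and this is handled uniformly by the Tietze--Urysohn theorem together with Lemma~\ref{l:3.2}.
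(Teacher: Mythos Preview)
Your proof is correct and follows essentially the same route as the paper: apply Theorem~\ref{t:JNP-C-norm} with $\TTT=\TTT_k$ and use the fact that $\TTT_k$-continuous functionals have compact support. The paper's own argument is just a one-line reference to Theorem~\ref{t:JNP-C-norm} and Lemma~\ref{l:3.1} (for $\TTT_k$ the sets $S$ are compact, so $\supp(\mu)\subseteq\overline S=S$); it leaves the Riesz-representation identification and the norm-matching implicit, whereas you spell these out carefully.
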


\begin{corollary}\label{c:seq-JNP}
If a Tychonoff space $X$ contains a non-trivial convergent sequence, then the function space $C_p(X)$ has the JNP.
\end{corollary}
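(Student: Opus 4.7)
The plan is to invoke Corollary \ref{c:JNP-Cp}, which reduces the task to exhibiting a weak$^\ast$ null sequence of finitely supported sign-measures of norm $1$ in $C_p(X)'$. The convergent sequence in $X$ should give such measures almost immediately.

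More precisely, let $\{x_n\}_{n\in\w}$ be a non-trivial convergent sequence in $X$ with limit $x_\infty$. Passing to a subsequence if necessary, I may assume $x_n\neq x_\infty$ for all $n\in\w$. The candidate sequence is
\[
\mu_n := \tfrac12\bigl(\delta_{x_n}-\delta_{x_\infty}\bigr),\qquad n\in\w,
\]
where $\delta_x$ denotes the Dirac measure at $x$. Each $\mu_n$ is a finitely supported sign-measure on $X$.

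To verify $\|\mu_n\|=1$, recall $\|\mu_n\|=\sup\{|\mu_n(f)|:f\in C(X),\ \|f\|_X\le 1\}$. The upper bound $\|\mu_n\|\le\tfrac12+\tfrac12=1$ is immediate. For the lower bound, since $X$ is Tychonoff and $x_n\neq x_\infty$, there exists a continuous function $f:X\to[-1,1]$ with $f(x_n)=1$ and $f(x_\infty)=-1$; then $\mu_n(f)=1$, giving $\|\mu_n\|\ge 1$. Finally, for weak$^\ast$ convergence, fix any $f\in C(X)$: the continuity of $f$ at $x_\infty$ combined with $x_n\to x_\infty$ yields
\[
\mu_n(f)=\tfrac12\bigl(f(x_n)-f(x_\infty)\bigr)\longrightarrow 0,
\]
so $\mu_n\to 0$ in $\sigma(C(X)',C(X))$. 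Corollary \ref{c:JNP-Cp} then delivers the JNP for $C_p(X)$.

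There is essentially no obstacle here; the only subtlety is ensuring $x_n\neq x_\infty$ (handled by the word ``non-trivial'' together with passing to a subsequence) and using the Tychonoff separation axiom to witness the norm lower bound.
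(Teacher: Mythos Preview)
Your proof is correct and follows essentially the same approach as the paper: both define the functionals $\mu_n(f)=\tfrac12\bigl(f(x_n)-f(x_\infty)\bigr)$, observe that they form a weak$^\ast$ null sequence of norm~$1$, and conclude via Corollary~\ref{c:JNP-Cp}. You provide slightly more detail on the norm computation (invoking the Tychonoff property for the lower bound), but the argument is the same.
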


\begin{proof}
Let $\{x_n\}_{n\in\w}\subseteq X$ be a non-trivial sequence that converges to some point $x\in X\setminus\{x_n\}_{n\in\w}$. For every $n\in\w$, consider the functional $\chi_n\in C_p(X)'$ defined by $\chi_n(f)=\frac12\big(f(x_n)-f(x)\big)$ for any $f\in C_p(X)$. It follows that $\{\chi_n\}_{n\in\w}$ is a null sequence in $C_p(X)'_{w^\ast}$ with $\|\chi_n\|=1$ for all $n\in\w$. By Corollary~\ref{c:JNP-Cp}, the function space $C_p(X)$ has the JNP.
\end{proof}

For function spaces over pseudocompact spaces, the JNP has even better ``hereditary'' properties.

\begin{theorem} \label{t:JNP-hereditary-1}
Let $X$ be a pseudocompact space, $D$ be a dense subset in $X$, and let $\tau$ and $\TTT$ be two locally convex topologies on $C(X)$ such that $\TTT_{p{\restriction}D}\subseteq\tau\subseteq\TTT\subseteq\TTT_{b}$. If the space $C_\tau(X)$ has JNP, then also the space $C_\TTT(X)$ has the JNP.
\end{theorem}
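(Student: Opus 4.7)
The plan is to exploit pseudocompactness of $X$ to reduce the situation to the Banach space $C_{\TTT_b}(X)$, and then observe that the notion of barrel-boundedness is the same in $C_\tau(X)$ and $C_\TTT(X)$ (namely, sup-norm boundedness). The density hypothesis on $D$ will enter precisely to make the sup-norm unit ball a barrel in $C_\tau(X)$.

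First I would record the following preliminary facts. Since $X$ is pseudocompact, $C(X)=C^b(X)$ and every subset of $X$ is functionally bounded, so $\TTT_b$ coincides with the sup-norm topology, making $C_{\TTT_b}(X)$ a Banach space (in particular barrelled). Let $V_0 := \{f\in C(X):\|f\|_X\le 1\}$; by continuity of elements of $C(X)$ and density of $D$, one has $\|f\|_X = \sup_{x\in D}|f(x)|$, so $V_0 = \bigcap_{x\in D}\{f:|f(x)|\le 1\}$ is $\TTT_{p\restriction D}$-closed.

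Next I would prove the key claim: for any locally convex topology $\sigma$ on $C(X)$ with $\TTT_{p\restriction D}\subseteq\sigma\subseteq\TTT_b$, a subset $B\subseteq C(X)$ is barrel-bounded in $C_\sigma(X)$ if and only if $B$ is sup-norm bounded. For the ``only if'' direction, $V_0$ is absorbent, absolutely convex, and (by the above) $\sigma$-closed, so $V_0$ is a barrel in $C_\sigma(X)$; hence if $B$ is barrel-bounded then $B\subseteq nV_0$ for some $n$, i.e.\ $\|f\|_X\le n$ for all $f\in B$. For the ``if'' direction, let $U$ be any barrel in $C_\sigma(X)$; since $\sigma\subseteq\TTT_b$, $U$ is also $\TTT_b$-closed, hence a barrel in the Banach space $C_{\TTT_b}(X)$. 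Because Banach spaces are barrelled, $U$ is a sup-norm neighborhood of zero, so any sup-norm bounded $B$ is absorbed by $U$.

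With this equivalence in hand, the rest is straightforward. Assume $C_\tau(X)$ has the JNP and fix a sequence $\{\mu_n\}_{n\in\w}\subseteq C_\tau(X)'$ that is null in $\sigma(C_\tau(X)',C_\tau(X))$, together with a barrel-bounded set $B\subseteq C_\tau(X)$ and $\e>0$ such that the set $\{n\in\w:\|\mu_n\|_B>\e\}$ is infinite. Since $\tau\subseteq\TTT$, the identity $C_\TTT(X)\to C_\tau(X)$ is continuous, which gives $C_\tau(X)'\subseteq C_\TTT(X)'$; moreover, the weak$^\ast$ topology on each dual is pointwise convergence on the common underlying space $C(X)$, so $\{\mu_n\}_{n\in\w}$ remains $\sigma(C_\TTT(X)',C_\TTT(X))$-null. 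By the key claim (applied to $\sigma=\tau$ and then to $\sigma=\TTT$), $B$ is sup-norm bounded and hence barrel-bounded in $C_\TTT(X)$. Thus $\|\mu_n\|_B\not\to 0$ already witnesses, in $C_\TTT(X)'$, that $\{\mu_n\}_{n\in\w}$ does not converge to zero in $\beta^\ast(C_\TTT(X)',C_\TTT(X))$, so $C_\TTT(X)$ has the JNP.

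The main obstacle, and the only nontrivial ingredient, is the identification of barrel-bounded sets in $C_\sigma(X)$ with sup-norm bounded sets. This identification uses pseudocompactness in an essential way (to make $\TTT_b$ a Banach topology, so that barrels of $C_\sigma(X)$ are automatically norm-neighborhoods of zero), and it uses density of $D$ to conclude that $V_0$ is a $\TTT_{p\restriction D}$-closed barrel (without this, one could not force barrel-bounded to imply norm-bounded).
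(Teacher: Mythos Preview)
Your proof is correct and follows essentially the same approach as the paper's: both arguments use pseudocompactness to make $C_{\TTT_b}(X)$ a Banach space, density of $D$ to see that $[X;1]$ is a $\TTT_{p\restriction D}$-barrel, and barrelledness of Banach spaces to conclude that every barrel in any intermediate topology contains some $[X;\e]$. The only difference is organizational: you isolate the equivalence ``barrel-bounded $\Leftrightarrow$ sup-norm bounded'' as a stand-alone claim valid for all intermediate $\sigma$, whereas the paper proves directly, for a given barrel $A$ in $C_\TTT(X)$, that the original barrel-bounded set $B\subseteq C_\tau(X)$ is absorbed by $A$.
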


\begin{proof}
If the lcs $C_\tau(X)$ has the JNP, then there exists a null sequence $\{\mu_n\}_{n\in\w}\subseteq C_\tau(X)'_{w^\ast}$ such that $\|\mu_n\|_B\not\to 0$ for some barrel-bounded set $B\subseteq C_\tau(X)$. Since $\tau\subseteq\TTT$, the functionals $\mu_n$ are $\TTT$-continuous and hence $\{\mu_n\}_{n\in\w}$ is a null sequence in $C_\TTT(X)'_{w^\ast}$. We claim that the set $B$ remains barrel-bounded in $C_\TTT(X)$. Indeed, since $\TTT\subseteq\TTT_b$, any barrel $A$ in $C_\TTT(X)$ is a barrel in $C_b(X)$. Since $X$ is pseudocompact, $C_b(X)$ is a Banach space whose topology is generated by the norm $\|\cdot\|_X$. Since Banach spaces are barrelled, the barrel $A$ is a neighborhood of zero and hence $[X;\e]\subseteq A$ for some $\e>0$. As $D$ is dense in $X$, the barrel $[X;\e]$ is a barrel in $C_{p{\restriction}D}(X)$ and hence a barrel in $C_\tau(X)$. Since the set $B$ is barrel-bounded in $C_\tau(X)$, there exists a positive real number $r$ such that $B\subseteq r\cdot [X;\e]\subseteq r\cdot A$, which means that $B$ is barrel-bounded in $C_\TTT(X)$. Since $\|\mu_n\|_B\not\to 0$, the locally convex space $C_\TTT(X)$ has the JNP.
\end{proof}

Recall that a Tychonoff space $X$ is called an {\em $F$-space} if every functionally open set $A$ in $X$ is {\em $C^\ast$-embedded} in the sense that every bounded continuous function $f:A\to \IR$ has a continuous extension $\bar f:X\to\IR$. For  numerous equivalent conditions for a Tychonoff space $X$ being an $F$-space, see \cite[14.25]{GiJ}. In particular, the Stone--\v{C}ech compactification $\beta \Gamma$ of a discrete space $\Gamma$ is a compact $F$-space. The following example generalizes the example given in (2) after Definition 1 of \cite{BKS} with a more detailed proof.

\begin{example} \label{exa:F-space-JNP}
For any infinite compact $F$-space $K$, the function space $C_p(K)$ does not have the JNP.
\end{example}

\begin{proof}
Let $S=\{\mu_n\}_{n\in\w}\subseteq C_p(K)'_{w^\ast}$ be a null sequence. Then $S$ is a weak$^\ast$ null sequence in the dual space $M(K)$ of all regular Borel sign-measures on $K$ of the Banach space $C(K)$. By  Corollary 4.5.9 of \cite{Dales-Lau},  the Banach space $C(K)$ has the Grothendieck property, which implies that $\mu_n \to 0$ in the weak topology of the Banach space $M(K)$. But since $S$ contains only sign-measures with finite support, it is contained in the Banach subspace $\ell_1(K)$ of $M(K)$. Now the Schur property of $\ell_1(K)$ implies that $S$ converges to zero in $\ell_1(K)$, i.e. $\| \mu_n\|\to 0$.  By Corollary \ref{c:JNP-Cp}, the function space $C_p(K)$ does not have the JNP.
\end{proof}
By Corollary \ref{c:JNP-Cp-Ck}, if the space $C_p(X)$ has the JNP, then also the space $\CC(X)$ has the JNP. However, the converse is not true in general as Example \ref{exa:F-space-JNP} shows. So the JNP is not equivalent for $C_p(X)$ and $\CC(X)$.

%%%%%%%%%%%%%%%%%%%%%%%%%%%%%%%%%%%%%%%%%%%%%%%
%%%%%%%%%%%%%%%%%%%%%%%%%%%%%%%%%%%%%%%%%%%%%%%
%%%%%%%%%%%%%%%%%%%%%%%%%%%%%%%%%%%%%%%%%%%%%%%

\newpage

\end{document}